\newtheorem{theorem}{Theorem}
\newtheorem{lemma}{Lemma}
\newtheorem{proposition}{Proposition}
\newtheorem{remark}{Remark}
\newtheorem{definition}{Definition}
\newtheorem{example}{Example}
\newcommand{\dG}{\vv{G}}
\newcommand{\dQ}{\vv{Q}}
\DeclareRobustCommand{\cev}[1]{%
  {\mathpalette\do@cev{#1}}%
}
\newcommand{\do@cev}[2]{%
  \vbox{\offinterlineskip
    \sbox\z@{$\m@th#1 x$}%
    \ialign{##\cr
      \hidewidth\reflectbox{$\m@th#1\vec{}\mkern4mu$}\hidewidth\cr
      \noalign{\kern-\ht\z@}
      $\m@th#1#2$\cr
    }%
  }%
}
\begin{document}

\title{On non-isomorphic biminimal pots realizing the cube}

\author[M.M. Ferrari]{Margherita Maria Ferrari}
\address{Department of Mathematics and Statistics, University of South Florida, Tampa, FL 33620, USA; currently at Department of Mathematics, University of Manitoba, Winnipeg, MB R3T 2N2, Canada}
\email{margherita.ferrari@umanitoba.ca}

\author[A. Pasotti]{Anita Pasotti}
\address{DICATAM - Sez. Matematica, Universit\`a degli Studi di Brescia, Via
Branze 43, I-25123 Brescia, Italy}
\email{anita.pasotti@unibs.it}

\author[T.Traetta]{Tommaso Traetta}
\address{DICATAM - Sez. Matematica, Universit\`a degli Studi di Brescia, Via
Branze 43, I-25123 Brescia, Italy}
\email{tommaso.traetta@unibs.it}

\begin{abstract}
In this paper, we disprove a conjecture recently proposed in [L. Almodóvar et al.,
https://arxiv.org/abs/2108.00035] on the non-existence of biminimal pots realizing the cube, namely pots with
the minimum number of tiles and the minimum number of bond-edge types.
In particular, we present two biminimal pots realizing the cube and show that these two pots are unique up to isomorphisms.
\end{abstract}

\maketitle

\section{Introduction}
In this paper, we study two new graph invariants, namely the minimum number of tile types and the minimum number of bond-edge types. These two parameters have been introduced to optimally build a target graph-like structure through DNA self-assembly processes using branched junction molecules~\cite{Ellis2014}. We refer the reader to~\cite{Ellis2019} for an overview of mathematical problems arising from DNA self-assembly processes. In this context, the minimum number of tile types corresponds to the smallest number of molecules required to build a target graph $G$; the minimum number of bond-edge types represents instead the smallest number of bonds needed to construct $G$. The increasing interest in these parameters is reflected in the growing body of work addressing their computations for certain classes of graphs under different experimental conditions
\cite{Almodovar21,Almodovar2021b,Almodovar2019,Bonvicini2020,Ellis2014,Ferrari2018,Griffin,Mattamira2020}. Except for~\cite{Ferrari2018}, in all these studies it is assumed that the molecules have ``flexible'' arms, so that no geometric property has to be encoded into the mathematical formalism.

Here, we focus on a conjecture about the assembly of the cube $Q$ posed in~\cite{Almodovar21}. In this recent paper, Almod\' ovar et al.~concentrated on determining an optimal set of molecules, called \emph{pot}, that realizes the cube $Q$ so that no graphs with order smaller than that of $Q$, or having the same order but not isomorphic to $Q$, can be realized. More precisely, the authors provide two distinct pots realizing the cube in this scenario: one pot utilizes the minimum number of tile types, which equals $6$, and the other utilizes the minimum number of bond-edge types, which is $5$. They conjecture that there is no pot achieving both the minimum number of tile types \emph{and} the minimum number of bond-edge types for the cube $Q$. In this work, we disprove this conjecture by presenting two \emph{biminimal} pots realizing the cube and having $6$ tile types and $5$ bond-edge types. Importantly, we show that these two pots are unique (up to isomorphisms). To this end, we employ edge-colorings and orientations of an undirected graph $G$ to describe the problem of computing the minimum number of tile types and the minimum number of bond-edge types of $G$. Note that these notions have already been employed to study the two quantities of interest; see~\cite{Ellis2014} and~\cite{Bonvicini2020} where, in the latter, this topic is strongly related to graph decompositions~\cite{BEZ} and some chromatic parameters. Nonetheless, we believe our setup facilitates the mathematical expositions of the theoretical results while offering another avenue to study the minimum number of tile types and the minimum number of bond-edge types of a graph $G$ (for instance, via signed graphs).

This paper is organized as follows. In Section~\ref{sec:prel}, we introduce the background definitions and basic properties used in this work. In Section~\ref{sec:res}, we recall known results related to the assembly of the cube. In Section~\ref{sec:pots}, we show that there are only two biminimal pots (up to isomorphisms) that realize the cube in the considered setting.


\section{Preliminaries}\label{sec:prel}
In this paper, graphs are connected and may have loops or multiple edges. To avoid ambiguity, we define a graph $G$ as a triple $(V(G), E(G), \mu)$ where
$V(G)$ is the set of vertices,
$E(G)$ is the set of edges, and $\mu: E(G)\rightarrow V(G)^{(2)}$ is a map,
where
\begin{enumerate}
\item $V(G)^{(2)}=V(G)\times V(G)$, and in this case $G$ is \emph{directed};
\item $V(G)^{(2)}$ is the set of (not necessarily distinct) unordered pairs of vertices of $G$, and in this case $G$ is \emph{undirected}.
\end{enumerate}
If $\mu(e)=\{x,y\}$ or $\mu(e)=(x,y)$, then $x$ and $y$ are called the end-vertices of $e$.
We will sometimes identify the edge $e$ with $\mu(e)$ and write $e=\mu(e)$.
Let $L(G)$ denote the set of all loops of $G$ and, given a vertex $x$ of $G$,
we denote by $G_{x}$ the subgraph of $G$ induced by all the edges
(including loops) of $G$ incident with $x$.

An \emph{edge-coloring} of $G$ with $c$ colors is a map $\lambda: E(G)\rightarrow [1,c]$, where $[1,c]$ denotes the set of all positive integers not greater than $c$.
The set $\lambda^{-1}(j)$ of all edges colored $j$ is referred to as the $j$-color class. Clearly, the color classes of $G$ partition between them $E(G)$, hence $|E(G)|=\sum_{j=1}^c|\lambda^{-1}(j)|$.

A \emph{tile} is a finite multiset of $\mathbb{Z}$ and a set of tiles is called a \emph{pot}.
Given a pot $P$, we denote by $\Sigma(P)\subset\mathbb{N}$ the set of all distinct positive integers (colors) appearing in some tile of $P$.
In the following, we define the concepts of tile and pot induced by
an edge-coloring $\lambda$.
\begin{enumerate}
\item The tile induced by $\lambda$ on
$x\in V(G)$ is the multiset
\begin{align*}
&\tau_x(\lambda)=
\{e_x\lambda(e) \mid e\in E(G_{x})\}\,\cup\,
\{\lambda(e) \mid e\in L(G_{x})\}\, \text{where}
\\
&e_x =
\begin{cases}
 1 & \text{if $e=\{x,y\}$, or $e=(x,y)$ and $x\neq y$},\\
 -1 & \text{if $e=(y,x)$},\\
 0 & \text{otherwise}.
\end{cases}
\end{align*}

\item The pot induced by $\lambda$ (on $G$) is the set $\mathcal{P}(\lambda)=\{\tau_x(\lambda)\mid x\in V(G)\}$ of distinct tiles induced by $\lambda$.
\end{enumerate}

We note that the concept of tile given above coincides with the equivalent notion of \emph{tile type} used in previous works on this topic, as in \cite{Ellis2019,Ellis2014}.
We also recall that for an edge-coloring $\lambda$ of an undirected graph $G$, the notion of tile $\tau_x(\lambda)$ on $x\in V(G)$ coincides with the notion of multi-palette of $x$ with respect to $\lambda$ (see \cite{Bonvicini2020}).

\begin{example}\label{ex1}
Let $G$ be the graph in Figure \ref{FigEx1} and $\lambda: E(G) \rightarrow [1,3]$ the associated edge-coloring.
The tiles induced by $\lambda$ are: $\tau_{x_0}(\lambda)=\{1,^2 2\}$, $\tau_{x_1}(\lambda)=\{-1,-2,3\}$,
$\tau_{x_2}(\lambda)=\{1,-2,-3\}$, $\tau_{x_3}(\lambda)=\{1,^2 -1\}$.
Hence the set $$\mathcal{P}(\lambda)=\{\{1,^2 2\},\{-1,-2,3\},\{1,-2,-3\},\{1,^2 -1\}\}$$ is the pot induced by $\lambda$ on $G$.
\end{example}

\begin{example}\label{ex2}
Let $G_1$ be the graph in Figure \ref{FigEx2} and $\lambda_1: E(G_1) \rightarrow [1,3]$ the associated edge-coloring.
Note that the pot induced by $\lambda_1$ is nothing but the pot
$\mathcal{P}(\lambda)$ of Example \ref{ex1}.
\end{example}

\begin{figure}[ht]
\begin{minipage}[b]{5.7cm}
\centering
\includegraphics[width=3.5cm]{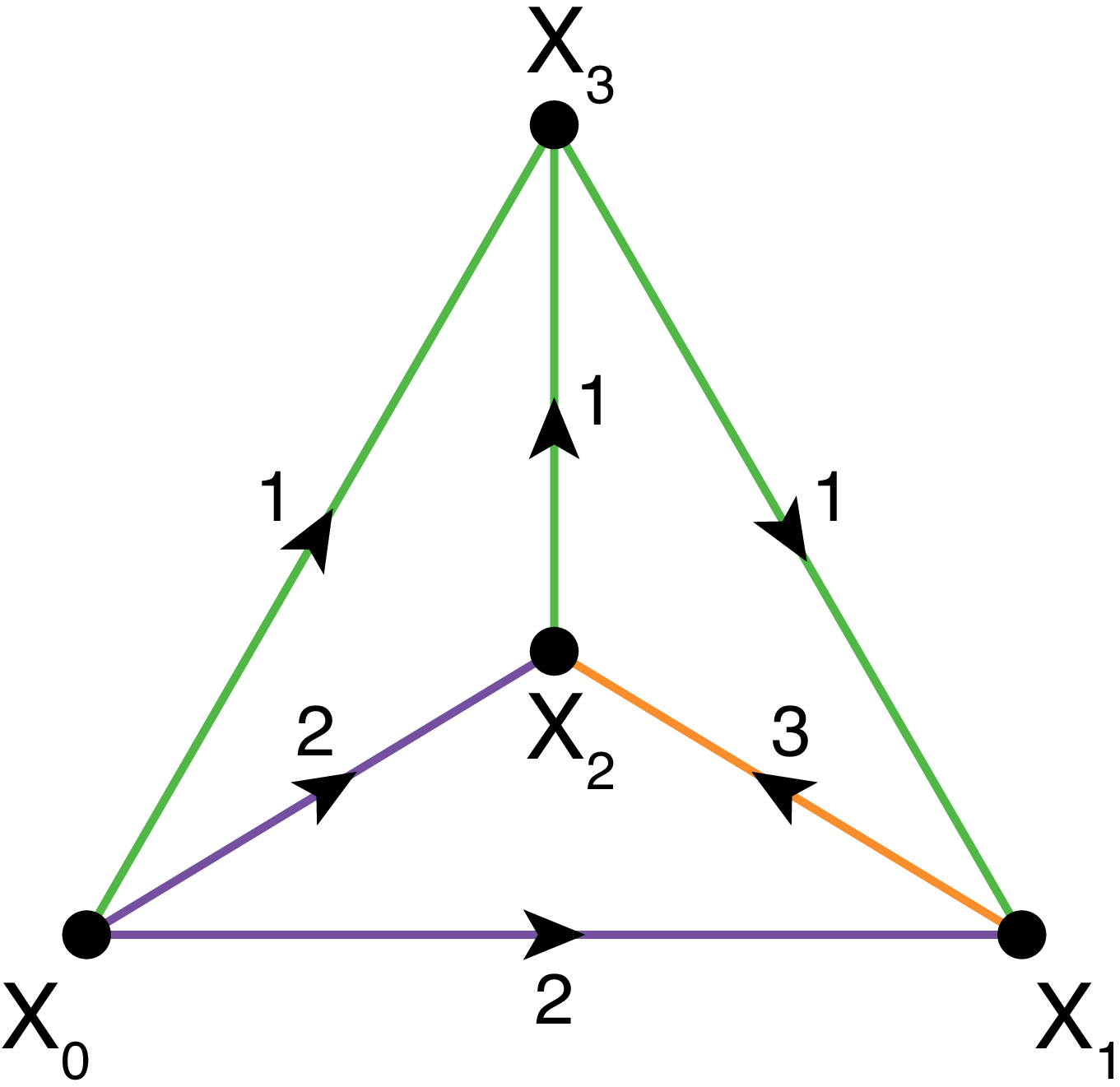}
\caption{\ }
\label{FigEx1}
\end{minipage}
\ \hspace{2mm} \hspace{3mm} \
\begin{minipage}[b]{5.7cm}
\centering
\includegraphics[width=3.5cm]{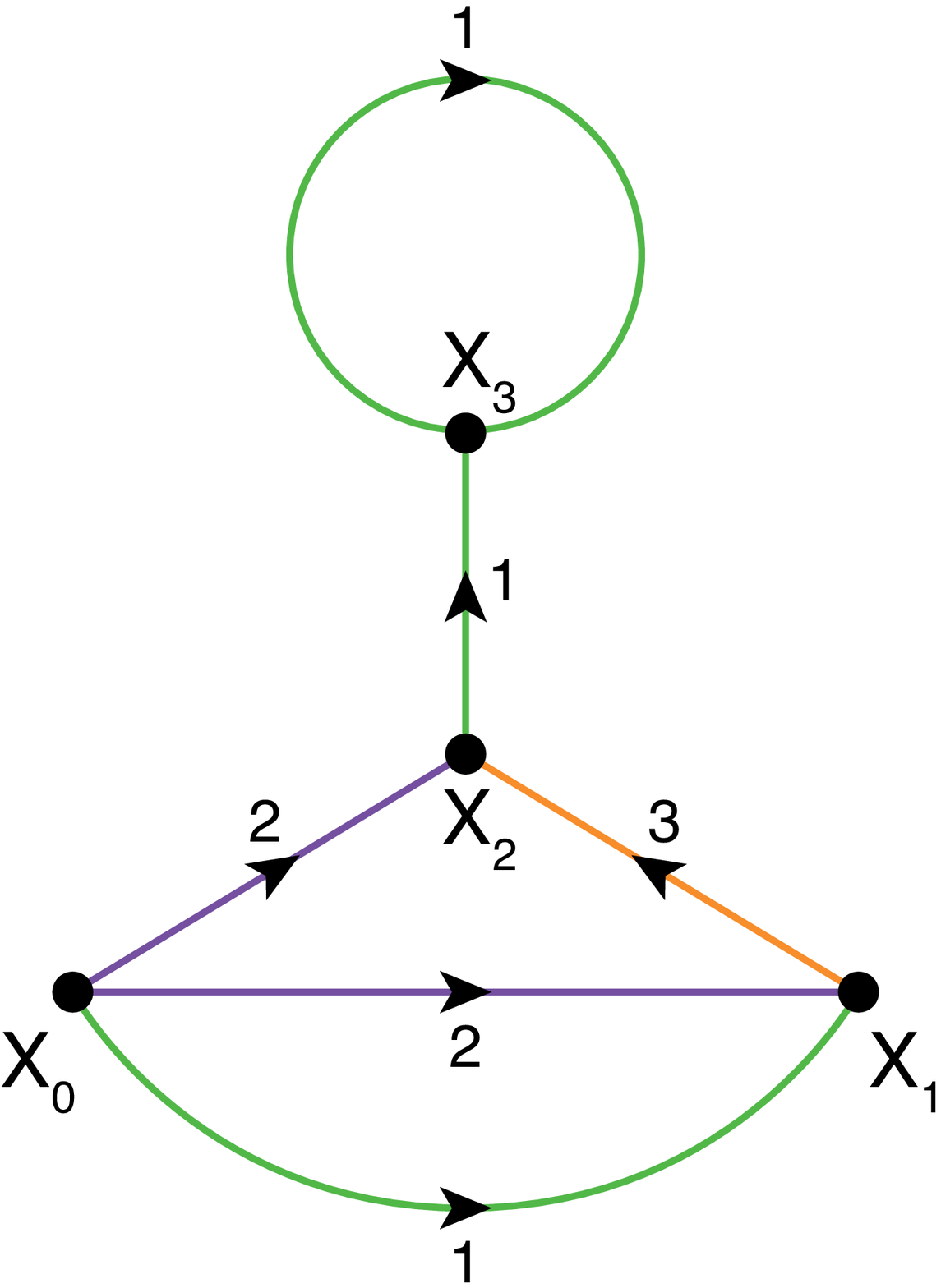}
\caption{\ }
\label{FigEx2}
\end{minipage}
\end{figure}

From now on, $G$ denotes an undirected graph.
Also, by $G^*$ we mean the set of all \emph{orientations of $G$},
that is, all directed graphs obtained from $G$ by choosing an oriention of its edges.
We denote an arbitrary graph in $G^*$ by $\dG$, and for every edge $e\in E(G)$, by $\vec{e}$ and $\cev{e}$ we mean the two orientations of $e$.

We recall here the standard terminology and notation used for some undirected graphs, see for instance \cite{W}.
The \emph{degree} of a vertex $x$ of a simple graph $G$ is the number of vertices of $G$ adjacent to $x$.
If every vertex of $G$ has degree $r$, one says that $G$ is $r$-regular. In particular, a $3$-regular graph,
is also called a \emph{cubic} graph.
A set of pairwise non-adjacent vertices of a graph $G$ is called \emph{independent}.
A graph $G$ is called \emph{bipartite} if $V(G)$ has a bipartition into two independent subsets. We denote by $C=(x_1, \ldots, x_k)$ the \emph{cycle} $C$ of length $k$ whose edges are $\{x_1, x_2\}$,
$\ldots, \{x_{k-1}, x_k\}, \{x_k, x_{k+1}=x_1\}$.
By $[x_1, \ldots, x_k]$ we denote the \emph{path} with $k$ vertices (and length $k-1$) obtained from $C$ by removing the edge $\{x_1, x_k\}$.
We recall that a graph is bipartite if and only if it contains no cycle of odd length. A \emph{star} of size $k$ (a $k$-star) is a set of $k$ edges,
none of which are loops, all incident in the same vertex.
Clearly, a $2$-star is nothing but a path of length $2$.
A \emph{matching} of a graph $G$ is a set of pairwise non incident edges of $G$, none of which are loops.

Let $\lambda: E(\dG)\rightarrow [1,c]$ be an edge-coloring
of an orientation of $G$.
The edge-coloring $\underline{\lambda}$ of $G$ induced by $\lambda$ is defined as follows:
if $\vec{e}\in E(\dG)$ is an orientation of $e\in E(G)$, then
$\underline{\lambda}(e) = \lambda(\vec{e})$.
Given two orientations $\dG_1$ and $\dG_2$ of $G$, the map $\sigma: E(\dG_1)\rightarrow E(\dG_2)$ such that $\sigma(\vec{e})\in\{\vec{e}, \cev{e}\}$ is called the
\emph{orientation map}  from $\dG_1$ to $\dG_2$.
Clearly, $\sigma^{-1}$ is an orientation map, as well.

Given a map $f$ defined on a set $X\subseteq \mathbb{Z}$,
we extend $f$ pointwise to any (family of) multisets of $X$.
For example, letting $\|z\|$ denote the absolute value of $z\in\mathbb{Z}$, and
given the tile $t=\{\,^{\mu_i}z_i\mid i=1,\ldots,u\}$ containing exactly $\mu_i$ copies of $z_i$ for each $i$, then $\|t\|=\{\,^{\mu_i}\|z_i\|\mid i\in t\}$.

The following result points out a clear connection between tiles and pots of $\lambda$ and $\underline{\lambda}$.
Its proof is straightforward and therefore left to the reader.
Note that this inequality was also obtained in~\cite{Bonvicini2020}.

\begin{lemma}\label{_lambda}
$\tau_{x}(\underline{\lambda})= \|\tau_{x}(\lambda)\|$ and
$\mathcal{P}(\underline{\lambda}) = \|\mathcal{P}(\lambda)\|$.
Therefore,
$|\mathcal{P}(\underline{\lambda})|\leq |\mathcal{P}(\lambda)|$.
\end{lemma}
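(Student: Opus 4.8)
The plan is to verify both identities by directly unwinding the definitions of $\tau_x$, of the induced coloring $\underline{\lambda}$, and of the pointwise extension of $\|\cdot\|$ to multisets; the inequality will then drop out for free.

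I would begin with the vertex-wise identity $\tau_x(\underline{\lambda})=\|\tau_x(\lambda)\|$ at a fixed $x\in V(G)$. Since the orientation map is a bijection between the edges of $G$ and those of $\dG$, it suffices to compare, edge by edge, the contribution of each edge incident with $x$ to the two tiles. For a non-loop edge $e$ with chosen orientation $\vec{e}$, its contribution to $\tau_x(\lambda)$ is the signed color $e_x\lambda(\vec{e})$ with $e_x\in\{1,-1\}$, so its absolute value is $\lambda(\vec{e})$; on the undirected side $e_x=1$, so its contribution to $\tau_x(\underline{\lambda})$ is exactly $\underline{\lambda}(e)=\lambda(\vec{e})$, and the two match. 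For a loop at $x$ the definition of $\tau_x$ records two elements (one from the first multiset, one from $L(G_x)$): in the directed tile these form the complementary pair $\{\lambda(\vec{e}),-\lambda(\vec{e})\}$, whose image under $\|\cdot\|$ is precisely the pair of equal positive colors that the same loop produces in $\tau_x(\underline{\lambda})$. Summing these matched contributions with multiplicity yields $\|\tau_x(\lambda)\|=\tau_x(\underline{\lambda})$.

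Next I would lift this to pots. Applying $\|\cdot\|$ pointwise to the family $\{\tau_x(\lambda)\mid x\in V(G)\}$ and inserting the vertex-wise identity gives
$$\|\mathcal{P}(\lambda)\| = \{\,\|\tau_x(\lambda)\| \mid x\in V(G)\,\} = \{\,\tau_x(\underline{\lambda}) \mid x\in V(G)\,\} = \mathcal{P}(\underline{\lambda}).$$
Finally, the inequality $|\mathcal{P}(\underline{\lambda})|\le|\mathcal{P}(\lambda)|$ follows because the map $t\mapsto\|t\|$ is, by the displayed equality, a surjection of the finite set $\mathcal{P}(\lambda)$ onto $\mathcal{P}(\underline{\lambda})$, and a surjection cannot increase cardinality.

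I do not expect a genuine obstacle, as the paper itself flags the proof as straightforward; the only points requiring care are purely bookkeeping. One must ensure that $\|\cdot\|$ is applied with multiplicities preserved when passing to multisets, and that loops are treated consistently on both sides — this is exactly what the two-part form of the definition of $\tau_x$ guarantees (indeed, the identity \emph{forces} the directed loop to contribute a complementary pair rather than a single element). It is also worth recording that the inequality is typically strict, since two signed tiles differing only in the signs of some colors (for instance $\{1,2\}$ and $\{-1,2\}$) collapse to a single tile after taking absolute values.
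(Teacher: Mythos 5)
Your proof is correct and is exactly the straightforward definition-unwinding the paper has in mind (it leaves the proof to the reader); your edge-by-edge matching, including reading $\vec{e}_x=-1$ for a directed loop so that it contributes the pair $\{\lambda(\vec{e}),-\lambda(\vec{e})\}$, agrees with how the paper itself uses the definition in the proof of Theorem~\ref{isomorphism}. The surjectivity argument for the cardinality inequality is also the intended one.
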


\begin{definition}
 Let $G$ be an undirected graph and let $P$ be a pot.
 A \emph{realization} of $G$ through $P$ is an edge-coloring $\lambda$ of an orientation $\dG$ of $G$ such that
$\mathcal{P}(\lambda)\subseteq P$. In this case, we say that $P$ realizes $G$ and write $G\in \mathcal{O}(P)$ (meaning that $G$ is in the output of $P$).

Furthermore, we say that $P$ realizes $G$ in Scenario $i\in[1,3]$,
and write $G\in \mathcal{O}_i(P)$,
if the following conditions hold:
\begin{description}
  \item[$i=1$] $G\in \mathcal{O}(P)$.
  \item[$i=2$] $G\in \mathcal{O}(P)$ and there is no $H\in \mathcal{O}(P)$ such that $|V(H)|<|V(G)|$.
  \item[$i=3$] $G\in \mathcal{O}(P)$ and there is no $H\in \mathcal{O}(P)$ such that $H\not\simeq G$ and $|V(H)|\leq|V(G)|$.
\end{description}
If $G\in \mathcal{O}_i(P)$, we say that $P$ $i$-\emph{realizes} $G$, or there exists an $i$-\emph{realization} of $G$ through $P$.\\
\end{definition}
\noindent
The  minimum size of a pot $i$-realizing $G$ is denoted by
\[
T_i(G) = min\{|P| \mid \text{$P$ is a pot that $i$-realizes $G$}\}.
\]
The minimum number of colors used by a pot that $i$-realizes $G$ is denoted by
\[
B_i(G) = min\{|\Sigma(P)| \mid \text{$P$ is a pot that $i$-realizes $G$}\}.
\]
These parameters are called \emph{minimum number of tiles} (or, equivalently, of tile types) and \emph{minimum number of bond-edge types}, respectively. For $i\in[1,3]$, $T_i(G)$ and $B_i(G)$ have been determined for cycles, trees, complete and complete bipartite graphs in~\cite{Ellis2014}. Recent works have instead focused on Platonic solids, square and triangular lattices, and other graph classes~\cite{Almodovar21,Almodovar2021b,Almodovar2019,Griffin,Mattamira2020}.

\begin{definition}
A pot $P$ that $i$-realizes $G$ is \emph{biminimal} if
$|P| = T_i(G)$ and $|\Sigma(P)| = B_i(G)$.
\end{definition}

We now introduce the concept of isomorphic pots.

\begin{definition}
Two pots $P$ and $P'$ are \emph{isomorphic} if there exists
a bijection (isomorphism) $f:\pm\Sigma(P)\rightarrow\pm\Sigma(P')$ such that
$f(-i)=-f(i)$ and $f(P)=P'$.
\end{definition}

Clearly, isomorphic pots have the same cardinality and use the same number of colors.
Given two isomorphic pots $P$ and $P'$, we denote by $f^+:\Sigma(P)\rightarrow \Sigma(P')$ the map defined as follows: $f^+(i) = \|f(i)\|$.

\begin{definition}
Let $\lambda:E(\dG)\rightarrow\Sigma(P)$ be a realization of $G$ through $P$.
A {$\lambda$-\emph{orientation map}} is an orientation map from $\dG$ to $\dG'$
that reverses or preserves the orientation of all the edges of $\dG$ belonging to the same color class.

Given an isomorphism $f$ from $P$ to a pot $P'$, we denote by $\sigma_{f, \lambda}:E(\dG)\rightarrow E(\dG')$ the $\lambda$-\emph{orientation map}
such that $\sigma_{f, \lambda}(\vec{e})=\vec{e}$ if and only if $f(\lambda(\vec{e}))>0$, otherwise $\sigma_{f,\lambda}(\vec{e})=\cev{e}$.
\end{definition}

The following result shows that an $i$-realization of a graph through a pot is preserved by isomorphic pots.

\begin{theorem}\label{isomorphism}
Let $\lambda:E(\dG)\rightarrow [1,c]$ be an $i$-realization of $G$
and let $P=\mathcal{P}(\lambda)$.
\begin{enumerate}
  \item[(i)] Given an isomorphism $f$ from $P$ to a pot $P'$, the map $\lambda'=f^+\circ \lambda \circ \sigma$, with
  $\sigma= \sigma^{-1}_{f, \lambda}$, is an $i$-realization of $G$ through $P'$.
  \item[(ii)] Given a bijection $g:\Sigma(P)\rightarrow X\subseteq\mathbb{N}$, a $\lambda$-orientation map $\sigma: E(\dG)\rightarrow E(\dG')$ and an automorphism $\rho$ of $G$, then $\lambda' = g\circ \lambda \circ \sigma^{-1}\circ \rho$ is an $i$-realization of $G$ where $\mathcal{P}(\lambda')$ is isomorphic to $P$ and $\Sigma(\mathcal{P}(\lambda'))=X$.
\end{enumerate}
\end{theorem}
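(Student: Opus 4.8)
The plan is to reduce both statements to a single computation: each transformation involved sends the tile at a vertex to the image of that tile under a pot isomorphism, vertex by vertex. Concretely, for (i) I would first prove the identity
\[
\tau_x(\lambda') = f(\tau_x(\lambda)) \qquad \text{for every } x\in V(G),
\]
recalling that an orientation map leaves the vertex set unchanged, so $\dG'$ and $\dG$ share $V(G)$. Granting this, taking the union over all vertices gives $\mathcal{P}(\lambda') = f(\mathcal{P}(\lambda)) = f(P) = P'$; in particular $\mathcal{P}(\lambda')\subseteq P'$, so $\lambda'$ realizes $G$ through $P'$.

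To establish the identity I would expand $\tau_x$ by its definition and compare the contribution of each edge $e$ incident with $x$ on the two sides. On the right, the contribution of $e$ to $\tau_x(\lambda)$ is $\epsilon\,\lambda(\vec{e})$, with $\epsilon\in\{+1,-1\}$ recording whether $e$ is outgoing or incoming at $x$ in $\dG$; applying $f$ and using $f(-i)=-f(i)$ yields $\epsilon\, f(\lambda(\vec{e}))$. On the left, set $j=\lambda(\vec{e})$; the definition of $\sigma_{f,\lambda}$ guarantees that $\dG'$ reverses $e$ (so its direction sign at $x$ becomes $-\epsilon$) exactly when $f(j)<0$, while $\lambda'=f^+\circ\lambda\circ\sigma$ assigns it the color $\|f(j)\|$. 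A two-case check—$f(j)>0$ versus $f(j)<0$, where in the second case $\|f(j)\|=-f(j)$—shows that the left contribution equals $\epsilon\,f(j)$ in both cases, matching the right side term by term. The sign bookkeeping here is the main obstacle, and the definition of $\sigma_{f,\lambda}$ is tailored precisely to make the two cases collapse. Loops contribute their color unsigned on both sides and match under the convention that $f$ keeps loop colors positive; for loopless $G$, such as the cube, this point is vacuous.

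Next I would promote ``realization'' to ``$i$-realization''. The crucial observation is that isomorphic pots have the same output. Indeed, applying the construction above to an arbitrary realization $\mu$ of any graph $H$ through $P$ produces a realization of $H$ through $P'$, and running it with $f^{-1}$ (which is again a pot isomorphism, since $f(-i)=-f(i)$ forces $f^{-1}$ to be odd and $f^{-1}(P')=P$) gives the reverse inclusion; hence $\mathcal{O}(P)=\mathcal{O}(P')$. Since each Scenario $i$ condition is phrased purely in terms of membership in $\mathcal{O}(P)$ and orders of vertex sets, and since $G\in\mathcal{O}(P')$ has already been shown, every non-existence clause holding for $\mathcal{O}(P)$ holds verbatim for $\mathcal{O}(P')$. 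Thus $\lambda'$ is an $i$-realization of $G$ through $P'$.

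For (ii) I would factor $\lambda'=g\circ\lambda\circ\sigma^{-1}\circ\rho$ into three successive operations and track the induced pot. First, reorienting by the $\lambda$-orientation map $\sigma$ reverses entire color classes, which flips the sign of color $j$ throughout all tiles exactly on the reversed classes; this realizes the sign-change pot isomorphism $\phi$ (with $\phi(j)=-j$ on reversed classes and $\phi(j)=j$ otherwise), so $\mathcal{P}(\lambda\circ\sigma^{-1})=\phi(\mathcal{P}(\lambda))$ with unchanged color set. Second, recoloring by the bijection $g$ gives $\mathcal{P}(g\circ\lambda\circ\sigma^{-1})=g(\mathcal{P}(\lambda\circ\sigma^{-1}))$, now on the colors $X$, using $g(-i)=-g(i)$. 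Third, precomposing with the automorphism $\rho$ only permutes the vertices—hence the tiles—leaving the set of distinct tiles unchanged. Composing, $\mathcal{P}(\lambda')=(g\circ\phi)(P)$, an isomorphic copy of $P$ with $\Sigma(\mathcal{P}(\lambda'))=X$; the $i$-realization property then follows from the output-invariance established above. The delicate points here are verifying that $\rho$ lifts to oriented edges compatibly with $\sigma$ and that the three operations genuinely commute with the tile construction as claimed.
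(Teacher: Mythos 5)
Your proposal follows essentially the same route as the paper: both establish the pointwise identity $\tau_x(\lambda')=f(\tau_x(\lambda))$ by the same two-case sign bookkeeping built into $\sigma_{f,\lambda}$, deduce $\mathcal{P}(\lambda')=f(P)=P'$, settle the Scenario-$i$ clauses by the output-invariance of isomorphic pots (the paper phrases this as a contradiction via $f^{-1}$), and for (ii) your factorization into reorientation, recoloring and automorphism yields exactly the paper's isomorphism, namely your $g\circ\phi$. The one slip is your appeal to a ``convention that $f$ keeps loop colors positive'' --- no such convention exists, but none is needed: a directed loop contributes the pair $\{\lambda(\vec{e}),-\lambda(\vec{e})\}$ to its tile, which $f$ sends to $\{f(\lambda(\vec{e})),-f(\lambda(\vec{e}))\}$, so the loop case goes through regardless (and, as you note, is vacuous for the cube).
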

\begin{proof} We start by proving $(i)$.
Consider the edge-coloring $\lambda'=f^+\circ \lambda \circ \sigma$ of $\dG'$, where  $\sigma^{-1} = \sigma_{f, \lambda}:E(\dG)\rightarrow E(\dG')$ is the orientation map defined as follows: $\sigma_{f,\lambda}(\vec{e})=\vec{e}$ if $f(\lambda(\vec{e}))>0$, otherwise $\sigma_{f,\lambda}(\vec{e})=\cev{e}$.

Since the maps $f, \lambda$ and $\sigma$ are all surjective, we have that $\lambda'(E(\dG')) = \Sigma(P')$. We are going to show that $\tau_x(\lambda') = f(\tau_x(\lambda))$ for every $x\in V(G)$, which implies that $\mathcal{P}(\lambda')=f(P) = P'$.
It is enough to notice that for every non-loop $\vec{e}\in E(G)$ the following property holds: if $f(\lambda(\vec{e}))>0$,
then $\sigma^{-1}(\vec{e})_x=\vec{e}_x$ and $f^+(\lambda(\vec{e})) = f(\lambda(\vec{e}))$, otherwise  $\sigma^{-1}(\vec{e})_x=-\vec{e}_x$
and $f^+(\lambda(\vec{e})) = -f(\lambda(\vec{e}))$. This implies that
\begin{equation}\label{cond}
\sigma^{-1}(\vec{e})_x\cdot f^+(\lambda(\vec{e})) = \vec{e}_x f(\lambda(\vec{e}))
= f(\vec{e}_x\lambda(\vec{e})),\; \text{for every $\vec{e}\in E(G)\setminus L(G)$}
\end{equation}
and recalling  that $\sigma(E(\dG'_x)) = E(\dG_x)$, we have that
\begin{align*}
  \tau_x(\lambda') &=
   \{\vec{e}_x\cdot \lambda'(\vec{e}) \mid \vec{e}\in E(\dG'_{x})\}
   \cup\, \{\lambda'(\vec{e}) \mid \vec{e}\in L(\dG'_{x})\}\\
&= \{\vec{e}_x\cdot f^+(\lambda(\sigma(\vec{e}))) \mid \vec{e}\in E(\dG'_{x})\}
   \cup\, \{f^+(\lambda(\sigma(\vec{e}))) \mid \vec{e}\in L(\dG'_{x})\}\\
&= \{\sigma^{-1}(\vec{e})_x\cdot f^+(\lambda(\vec{e})) \mid \vec{e}\in E(\dG_{x})\}
   \cup\, \{f^+(\lambda(\vec{e})) \mid \vec{e}\in L(\dG_{x})\}\\
&\stackrel{\eqref{cond}}{=}\{f(\vec{e}_x\cdot\lambda(\vec{e})) \mid \vec{e}\in E(\dG_{x})\setminus L(\dG_{x})\} \cup\, \{\pm f^+(\lambda(\vec{e})) \mid \vec{e}\in L(\dG_{x})\}\\
& = \{f(\vec{e}_x\cdot\lambda(\vec{e})) \mid \vec{e}\in E(\dG_{x})\setminus L(\dG_{x})\} \cup\, \{f(\lambda(\vec{e})), f(-\lambda(\vec{e})) \mid \vec{e}\in L(\dG_{x})\}\\
& = \{f(\vec{e}_x\cdot\lambda(\vec{e})) \mid \vec{e}\in E(\dG_{x})\} \cup\, \{f(\lambda(\vec{e})) \mid \vec{e}\in L(\dG_{x})\}
= f(\tau_x(\lambda)),
\end{align*}

for every $x\in V(G)$. Therefore, $\lambda'$ is a realization of $G$ through $P'$.

Assuming now that $\lambda$ is an $i$-realization of $G$ ($i=2,3$),
it is left to show that $P'$ $i$-realizes $G$. For a contradiction,
assume that $P'$ realizes a graph $H$ non isomorphic to $G$ whose order is either smaller than $|V(G)|$ ($i=2,3$) or equal to $|V(G)|$ ($i=3$). Then, by the first part of the proof it follows that $P=\mathcal{P}(\lambda)$ realizes $H$ (since $P$ and $P'$ are isomorphic), contradicting the assumption that $\lambda$ is an $i$-realization of~$G$.

It is left to prove $(ii)$. Considering that
$\tau_x(\lambda') = \tau_{\rho(x)}(\lambda'\circ \rho^{-1})$ for each $x\in V(G)$, it is enough to prove the assertion when $\rho$ is the identity, that is, $\lambda' = g\circ \lambda \circ \sigma^{-1}$. It is not difficult
to check that $\lambda'$ is a realization of $G$ through $P'=\mathcal{P}(\lambda')$,
where $\Sigma(P')=X$.
Now let $f:\pm \Sigma(P) \rightarrow \pm \Sigma(P')$ be the map such that
$f(-i)=-f(i)$, and
\[
  f(i)=
  \begin{cases}
    g(i) & \text{if $\sigma$ fixes the orientation of all edges of the $i$-color class of $\lambda$}, \\
    -g(i) & \text{otherwise},
  \end{cases}
\]
for each $i\in \Sigma(P)$. One can check that $f(P)=P'$, hence
$P$ and $P'$ are isomorphic pots. By part $(i)$, we have that $P'$ $i$-realizes $G$, therefore $\lambda'$ is an $i$-realization of~$G$.
\end{proof}

\begin{remark}\label{isomorphism:rem}
We point out that Theorem \ref{isomorphism}.(ii) states that starting from a
realization $\lambda:E(\dG)\rightarrow \Sigma(P)$ of $G$,
if we permute the colors in $\Sigma(P)$,
switch the orientation of all edges of some color class of $\lambda$,
or apply an automorphism of the graph $G$,
we obtain another realization of $G$ whose pot is isomorphic to $P$.
\end{remark}

\section{Known results}\label{sec:res}
In this section, we recall some known facts on realizations of undirected graphs.
The following result is straightforward.

\begin{remark}\label{tilestructure}
Let $\lambda:E(\dG)\rightarrow [1,c]$ be a realization of an undirected graph $G$. We note that:
\begin{enumerate}
\item[(i)] If $G$ has a loop then there exists a tile $t\in \mathcal{P}(\lambda)$ such that $|-t\cap t|\geq 2$.
\item[(ii)] If $G$ has a multiple edge then there exist two tiles $t_i,t_j\in \mathcal{P}(\lambda)$ such that $|-t_i \cap t_j|\geq 2$.
\end{enumerate}
\end{remark}

We refer the reader to Example \ref{ex2} for an application of Remark \ref{tilestructure}.

\begin{remark}\cite[Section 4]{Almodovar21}\label{incidentedges}
Let $\lambda:E(\dG)\rightarrow [1,c]$ be a $3$-realization of an undirected loopless graph $G$. We note that if $\vv{e_1},\vv{e_2}\in E(\dG_x)$ and
$\lambda(\vv{e_1})=\lambda(\vv{e_2})$, then $x$ is either the tail or the head of both $\vv{e_1}$ and $\vv{e_2}$. In other words, the edges of $\dG_x$ receiving the same color from $\lambda$ must be all outgoing from or all ingoing in $x$.
\end{remark}

Now we recall a very important class of graphs, the so called \emph{Cayley graphs}.
Let $\Gamma$ be an additive group (not necessarily abelian) and let $\Omega \subseteq \Gamma \setminus\{0\}$ such that for every $\omega \in \Omega$, also
$-\omega \in \Omega$. The Cayley graph on $\Gamma$ with connection set $\Omega$ is the simple graph $G$ defined as follows:
$V(G)=\Gamma$ and $\{x,y\}\in E(G) \Leftrightarrow x-y \in \Omega$. We will denote this graph by $Cay[\Gamma:\Omega]$.

 The cube graph $Q$ can be seen as a Cayley graph on $\mathbb{Z}_2^3$. To ease notation, each triple $(i,j,k)\in \mathbb{Z}_2^3$ will be denoted by $ijk$,
 and let  $Q=Cay[\mathbb{Z}_2^3:S]$, where $S=\{100, 010, 001\}$.
Given an edge $\{x,y\}$ of $Q$ and an element $g\in \mathbb{Z}_2^3$,
 we set
 $\{x,y\}+g=\{x+g, y+g\}$.
Note that for every $\overrightarrow{Q}\in Q^*$, a tile of $\overrightarrow{Q}$ (resp. $Q$) is a multiset of
$\mathbb{Z}$ of size $3$.

Since the cube is a $3$-regular graph, from \cite{Ellis2014} it follows that $B_1(Q)=1$ and $T_1(Q)=2$.
Furthermore, from \cite{Almodovar21} we know that $B_2(Q)=2$ and $T_2(Q)=3$ and these values are achieved simultaneously by the same pot.
In \cite{Almodovar21}, it is also shown that a $3$-realization of $Q$ uses at least $5$ colors ($B_3(Q)\geq 5$) and determines pots of size at least six ($T_3(Q)\geq 6$). They also construct two pots $P$ and $P'$ that
$3$-realizes $Q$ and reach the minimum number of colors or of distinct tiles, respectively; more precisely,
$|\Sigma(P)|=5, |P|=8$, while
$|\Sigma(P')|=|P'|=6$. Therefore, we have the following.
\begin{lemma}\cite{Almodovar21}\label{BT3cube}
  $B_3(Q)=5$ and $T_3(Q) = 6$.
\end{lemma}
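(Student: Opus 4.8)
The plan is to establish each equality by a matching lower and upper bound. The two lower bounds, $B_3(Q)\ge 5$ and $T_3(Q)\ge 6$, are precisely the inequalities recalled in the paragraph preceding the statement and are proved in~\cite{Almodovar21}, so I would invoke them directly. To reconstruct them, one would argue as follows: since $Q$ is a connected bipartite $3$-regular graph on eight vertices, every realization assigns to each vertex a tile that is a size-$3$ multiset of signed colors, and hence any graph assembled from such a pot has all degrees equal to $3$ (counting a loop as degree two). The Scenario-$3$ condition forbids the induced pot from realizing any $H\not\simeq Q$ with $|V(H)|\le 8$; one then shows that a pot using at most four colors, or producing at most five distinct tiles, always admits an alternative assembly — a nonnegative integer combination of its tiles in which, for each color, outgoing and ingoing ends match (cf.\ Remark~\ref{incidentedges}) — realizing a smaller or non-isomorphic cubic (multi)graph on at most eight vertices. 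Hence at least five colors and six tiles are forced.

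For the upper bounds it suffices to exhibit, for each parameter, one pot attaining it. The pot $P$ recalled above uses $|\Sigma(P)|=5$ colors and $3$-realizes $Q$, so $B_3(Q)\le 5$; the pot $P'$ uses $|P'|=6$ tiles and $3$-realizes $Q$, so $T_3(Q)\le 6$. Combining these with the lower bounds gives $B_3(Q)=5$ and $T_3(Q)=6$.

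The step demanding genuine work, and the one I expect to be the main obstacle, is verifying that $P$ and $P'$ realize $Q$ in Scenario $3$ rather than merely in Scenario $1$. Producing an orientation $\dQ$ and an edge-coloring $\lambda$ whose induced pot is the prescribed one is routine bookkeeping. The delicate part is the Scenario-$3$ clause: I would need to show that neither pot realizes any $H\not\simeq Q$ of order at most eight. I would organize this as a finite case analysis, enumerating all nonnegative tile multiplicities for which every color balances in/out across the assembly, discarding those whose total order exceeds eight, and checking that the only surviving graph of order $\le 8$ is the cube. Remark~\ref{tilestructure} helps prune this search by ruling out loops and multiple edges in any target whose tiles lack the required intersection pattern $|-t\cap t|\ge 2$ (resp.\ $|-t_i\cap t_j|\ge 2$), while Remark~\ref{incidentedges} constrains how equally colored incident edges may be oriented, so that the competing $3$-regular targets of order $4$, $6$, and $8$ can be eliminated by hand.
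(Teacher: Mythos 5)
Your proposal matches the paper's treatment: Lemma~\ref{BT3cube} is stated as a citation of~\cite{Almodovar21}, with the lower bounds $B_3(Q)\geq 5$ and $T_3(Q)\geq 6$ taken from that reference and the matching upper bounds supplied by the two pots $P$ (with $|\Sigma(P)|=5$) and $P'$ (with $|P'|=6$) recalled in the preceding paragraph. Your additional sketch of how the lower bounds and the Scenario-$3$ verification would be reconstructed goes beyond what the paper itself includes, but the logical structure --- cited lower bounds plus exhibited pots --- is exactly the same.
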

In \cite{Almodovar21}, the authors leave open the problem of determining whether there exists a biminimal $3$-realization of $Q$.

The following lemma is a restatement of \cite[Lemma 4.4]{Almodovar21} in terms of color classes.

\begin{lemma}\cite[Lemma 4.4]{Almodovar21}
\label{Tom}
Let $\lambda:E(\vv{Q})\rightarrow [1,c]$ be a $3$-realization of $Q$
and let $\mathcal{C}$ be the set of color classes of $\underline{\lambda}$.
\begin{enumerate}
\item[(i)] A color class $G\in \mathcal{C}$
is either a star (with $1$, $2$ or $3$ edges) or a matching $\{e, e+111\}$ for some $e\in E(Q)$.
\item[(ii)]
$\mathcal{C}$ contains $n$ $3$-stars (resp. $2$-stars)
if and only if
$\mathcal{P}(\lambda)$ contains  $n$ monochromatic (bichromatic) tiles.
\end{enumerate}
\end{lemma}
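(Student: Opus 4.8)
The plan is to prove Lemma~\ref{Tom} by directly translating the structural information encoded in an edge-coloring $\lambda$ of $\vv{Q}$ into the combinatorial shape of the color classes of the induced undirected coloring $\underline{\lambda}$, and then reading off the consequences for the tiles. First I would fix a color $j\in[1,c]$ and examine the color class $G=\underline{\lambda}^{-1}(j)$ as a subgraph of $Q$. Since $Q$ is $3$-regular with $8$ vertices, the class $G$ is a subgraph in which every vertex has degree at most $3$. The crucial input for part~(i) is Remark~\ref{incidentedges}: because $\lambda$ is a $3$-realization of a loopless graph, any two edges of the same color class incident with a common vertex $x$ must be either both outgoing from or both ingoing in $x$ in the orientation $\vv{Q}$. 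I would use this to rule out any configuration that is not a star or the specific matching $\{e, e+111\}$.

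The core of part~(i) is a case analysis on the connected components of $G$. If a component contains a vertex of degree $\geq 2$, I claim all edges of that component are incident to a single vertex, i.e.\ the component is a star. The obstacle to argue past is that a path of length $2$ centered at $x$ forces the two edges to be co-oriented at $x$ (say both outgoing), and extending to a path of length $3$ would create, at the next vertex $y$, two same-colored edges that cannot both be co-oriented consistently without violating Remark~\ref{incidentedges}; one must check that the orientation constraint propagates and eventually contradicts the structure, forcing the component to collapse to a star. Since $Q$ is bipartite and cubic, a star can have at most $3$ edges. For the components consisting of a single edge $e$, I would then invoke the property (from \cite{Almodovar21}) that the antipodal map $x\mapsto x+111$ interacts with the coloring so that an isolated edge $e$ of color $j$ must be paired with its antipodal translate $e+111$; this is exactly where the specific matching $\{e, e+111\}$ arises, and I expect this to be \textbf{the main obstacle}, since it relies on the geometry of $Q$ as $Cay[\mathbb{Z}_2^3:S]$ and the precise antipodal structure rather than on a purely local orientation argument.

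For part~(ii), I would use the identification (noted after Lemma~\ref{_lambda}) that $\tau_x(\underline{\lambda})=\|\tau_x(\lambda)\|$ together with the fact that each tile of $\vv{Q}$ has exactly $3$ elements. A tile $\tau_x(\lambda)$ is \emph{monochromatic} precisely when all three edges at $x$ receive the same color (up to sign), which by part~(i) happens exactly when $x$ is the center of a $3$-star in some color class; likewise a \emph{bichromatic} tile at $x$ corresponds to two of the three incident edges sharing a color, i.e.\ $x$ being the center of a $2$-star. I would verify that the center of a $3$-star (resp.\ $2$-star) is the unique vertex responsible for a monochromatic (resp.\ bichromatic) tile, establishing a bijection between $3$-stars ($2$-stars) in $\mathcal{C}$ and monochromatic (bichromatic) tiles in $\mathcal{P}(\lambda)$, which gives the counting statement. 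The remaining verification is routine once the correspondence between local degree patterns and tile multiplicities is set up via Lemma~\ref{_lambda}.
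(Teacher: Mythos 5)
First, a remark on the comparison itself: the paper gives no proof of Lemma~\ref{Tom}. It is imported from \cite[Lemma 4.4]{Almodovar21} and merely restated in the language of color classes, so there is no in-paper argument to measure yours against; your attempt has to stand on its own. Your part (ii) is essentially sound \emph{given} part (i): a monochromatic (resp.\ bichromatic) tile at $x$ corresponds exactly to $x$ being the center of a $3$-star (resp.\ $2$-star) color class, distinct such stars carry distinct colors and hence yield distinct tiles, and Lemma~\ref{_lambda} transfers the count from $\underline{\lambda}$ to $\lambda$.

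Part (i), however, contains a genuine gap. Your central claim is that Remark~\ref{incidentedges} ``propagates'' along a path and eventually contradicts itself, forcing any component with a vertex of degree at least $2$ to collapse to a star. This is false: on a monochromatic path $[a,x,y,d]$ the orientation $(a,x),(y,x),(y,d)$ makes both edges at $x$ ingoing and both edges at $y$ outgoing, so Remark~\ref{incidentedges} is satisfied at every vertex; the local orientation constraint by itself never excludes long paths (or even cycles). The missing idea is the \emph{edge-swap} argument that underlies \cite[Lemma 4.4]{Almodovar21}: if $(a,b)$ and $(c,d)$ are distinct directed edges with $\lambda((a,b))=\lambda((c,d))$, replacing them by $(a,d)$ and $(c,b)$ changes no tile, so the resulting graph $H$ lies in $\mathcal{O}(\mathcal{P}(\lambda))$, has $8$ vertices, and must therefore be isomorphic to $Q$. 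Applied to adjacent edges this recovers Remark~\ref{incidentedges} (a head-to-tail pair would produce a loop); applied to disjoint edges it shows the swap may create neither a repeated edge nor an edge inside one part of the bipartition of $Q$, and a short computation in $Cay[\mathbb{Z}_2^3:S]$ shows this forces the two edges to be $e$ and $e+111$, suitably oriented. That computation is exactly what kills the length-$3$ path (its two end edges are disjoint but are not antipodal mates) and what proves the matching case, which you instead defer wholesale to \cite{Almodovar21} --- and misstate: a color class consisting of a single edge is perfectly admissible (it is a $1$-star), so an isolated edge need not be ``paired with its antipodal translate''; the correct assertion is that any two \emph{disjoint} same-colored edges must be $e$ and $e+111$, whence a non-star class is precisely such a matching. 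Without the swap argument, neither the star structure nor the matching case is actually established.
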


%

\section{The structure of a biminimal pot that $3$-realizes the cube}\label{sec:pots}
In this section, we provide two biminimal pots, $P_1$ and $P_2$, that realize the cube in Scenario 3 (Propositions \ref{prop:pot1} and \ref{prop:pot2}). We then show that these pots are unique up to isomorphisms (Theorem \ref{unique}).

\begin{proposition}\label{prop:pot1}
The pot
$$P_1= \big\{ \{^31\}, \{^32\}, \{-2, \,^2{-3}\}, \{-1, \,^2{-4}\}, \{-1,3,-5\}, \{-2,4,5\}\big\}$$
is a biminimal pot that $3$-realizes the cube.
\end{proposition}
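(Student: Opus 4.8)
The statement asserts three things about $P_1$: that it $3$-realizes the cube $Q$, and that it is \emph{biminimal}, meaning $|P_1| = T_3(Q) = 6$ and $|\Sigma(P_1)| = B_3(Q) = 5$. Let me look at the pot. It has six tiles, and the colors appearing (in absolute value) are $1,2,3,4,5$, so $|\Sigma(P_1)|=5$. Since Lemma~\ref{BT3cube} gives $T_3(Q)=6$ and $B_3(Q)=5$, the minimality conditions are immediate \emph{once we know $P_1$ actually $3$-realizes $Q$}. So the whole weight of the proof falls on showing $Q \in \mathcal{O}_3(P_1)$.

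My thinking about the plan.

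Let me think about what I actually need to do. First, the easy bookkeeping: count tiles and colors to confirm the biminimality bounds against Lemma~\ref{BT3cube}. That's two lines.

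The substantive work splits into two parts. Part one: exhibit an explicit realization, i.e. produce an orientation $\vec{Q}$ of the cube and an edge-coloring $\lambda: E(\vec{Q}) \to [1,5]$ such that $\mathcal{P}(\lambda) = P_1$ (equality, since $P_1$ has exactly six tiles and $Q$ has eight vertices, so the eight induced tiles must collapse onto these six). Concretely, I'd label the vertices by $\mathbb{Z}_2^3$ as in the Cayley-graph setup, assign an orientation and a color in $\{1,\dots,5\}$ to each of the twelve edges, and then compute $\tau_x(\lambda)$ at each of the eight vertices, checking each lands in $P_1$. I'd want to reverse-engineer the assignment from the six tiles: note the signs built into the tiles (each nonloop tile here has specific $\pm$ patterns) tell me the orientation at each vertex, and Remark~\ref{incidentedges} constrains how same-colored edges meet a vertex. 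I expect to present this as a figure or a small table of the twelve oriented, colored edges.

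The harder part — Scenario~3. Having a realization gives $Q \in \mathcal{O}(P_1)$, i.e. the $i=1$ condition. But Scenario~3 additionally requires that \emph{no} graph $H$ with $H \not\simeq Q$ and $|V(H)| \le 8$ lies in $\mathcal{O}(P_1)$. This is the genuine obstacle: one must rule out all non-cube graphs of order at most $8$ that the pot could assemble. I would \emph{not} attempt this by brute enumeration. Instead I'd lean on structural facts. The decisive observation is that every tile in $P_1$ has size $3$ (three elements counted with multiplicity), so any graph realized by $P_1$ is $3$-regular, hence has even order and at least $8$ vertices once connected and loopless; combined with $|V(H)| \le 8$ this forces $|V(H)| \in \{4,6,8\}$, and $3$-regularity plus the size constraint pins $|V(H)|=8$. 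Here I expect to invoke, or re-derive in the spirit of Lemma~\ref{Tom}, the constraints that the $i=3$ lower bounds $B_3(Q)\ge 5$ and $T_3(Q)\ge 6$ in~\cite{Almodovar21} were built from: the tiles of $P_1$ contain no repeated-with-opposite-sign pairs forcing loops and no cross-tile cancellation forcing multi-edges (checking Remark~\ref{tilestructure} fails in both directions rules out loops and multiple edges). The remaining task is to argue that among simple connected cubic graphs on $8$ vertices, the assembly constraints imposed by these specific six tiles force the result to be $Q$ itself. The cleanest route is to show the pot's tiles can only be "glued" consistently in the cube pattern — essentially a rigidity argument on how the signed colors must match across shared edges. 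I anticipate the authors handle this by analyzing which tiles can be adjacent (a tile with $+k$ must meet a tile with $-k$ across each edge of color $k$), and showing the resulting adjacency structure reconstructs $Q$ uniquely; I would organize that as the main lemma of the argument and expect it to be where all the real effort concentrates.
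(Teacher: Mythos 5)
Your overall architecture (exhibit an explicit realization, then rule out smaller or non-isomorphic graphs of order at most $8$) matches the paper's, and the bookkeeping reduction of biminimality to Lemma~\ref{BT3cube} is exactly right. But there is a genuine gap in your argument for why no graph on fewer than $8$ vertices is realized. You claim that $3$-regularity together with connectedness and looplessness forces at least $8$ vertices; that is false ($K_4$ and $K_{3,3}$ are connected simple cubic graphs on $4$ and $6$ vertices), and your own next clause concedes $|V(H)|\in\{4,6,8\}$ before asserting, without justification, that ``the size constraint pins $|V(H)|=8$.'' The missing ingredient is the one the paper actually uses: if $R_i$ denotes the number of vertices receiving tile $t_i$, then for each color $k$ the total number of $+k$ entries must equal the total number of $-k$ entries, yielding a homogeneous linear system (the device from~\cite{Ellis2014}) whose only nonnegative integer solutions are $(R_1,\ldots,R_6)=r(1,1,1,1,2,2)$, forcing $|V(H)|=8r\geq 8$. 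Without this (or an equivalent signed-color counting argument) the case $|V(H)|\in\{4,6\}$ is not excluded.

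The second half of your plan --- ruling out the non-cube cubic graphs on $8$ vertices by a ``rigidity'' analysis of which tiles can be adjacent --- is left entirely as a sketch, so it cannot be assessed as a proof. For comparison, the paper's concrete mechanism is short: adjacency of vertices $x,y$ requires some $j$ with $j\in\tau_x(\lambda)$ and $-j\in\tau_y(\lambda)$, and a direct check shows the four vertices carrying $\{^31\},\{-2,{}^2{-3}\},\{-2,4,5\},\{-2,4,5\}$ and the four carrying the remaining tiles form two independent sets partitioning $V(H)$; hence $H$ is bipartite, and the cube is the unique bipartite graph among the five connected simple cubic graphs of order $8$. If you carry out your adjacency analysis, steering it toward this bipartiteness observation is the efficient way to close the argument.
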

\begin{proof}
Let $\lambda:E(\dQ)\rightarrow [1,5]$ be the edge-coloring of $Q$ inducing the pot $P_1$, that is $P_1=\mathcal{P}(\lambda)$.
In Figure \ref{prop1} it is shown that $P_1$ realizes the cube and the following tabular shows the tiles:
\[
\begin{tabular}{r|c|c|c|c|c|c}
$x$ & 000 & 111 & 100 & 011 & $001,010$ & $101,110$\\ \hline
$\tau_x(\lambda)$ & $\{^3 1\}$ & $\{^3 2\}$ & $\{-1, ^2 -4\}$ & $\{-2, ^2 -3\}$  & $\{-1, 3, -5\}$ &  $\{-2, 4, 5\}$
\end{tabular}
\]
First of all note that, by Remark \ref{tilestructure}, if $G$ is a graph such that $G \in \mathcal{O}(P_1)$,
then it is a simple graph.
Now we prove that $G$ has at least $8$ vertices, to do this it is useful to analyze solutions to the equations
that the tiles in $P_1$ must satisfy. For $i=1,\ldots,6$, denote by $R_i$ the number of times a tile $t_i$
appears in the realization, or, equivalently, the number of vertices in $\dG$ receiving the tile $t_i$,
that is
$R_i=|\{x\in V(G)\mid \tau_x(\lambda)=t_i\}|$. Clearly, $R_1+\cdots +R_6=|V(G)|$.
Also, as shown in \cite{Ellis2014},  the $R_i$'s have to satisfy the following homogeneous system of linear equations:
\begin{eqnarray*}
 3R_1-R_3-R_5 &=& 0 \\
  3R_2-R_4-R_6 &=& 0 \\
  -2R_3+R_6 &=& 0 \\
  -2R_4+R_5 &=& 0 \\
  -R_5+R_6 &=& 0
\end{eqnarray*}
%
Therefore,
$(R_1, \ldots, R_6)=r(1,1,1,1,2,2)$  and $|V(G)|=8r$ for some $r\geq 1$.
Hence we get $|V(G)|\geq 8$, and when $r=1$, we have $(R_1, \ldots, R_6)=(1,1,1,1,2,2)$.
So we focus on simple cubic graphs with $8$ vertices and it is well known, see for instance \cite{GraphsHB}, that there are exactly five graphs with these properties.
Let $H$ be a cubic graph such that $V(H)=\{v_1,v_2,\ldots,v_8\}$ and $H \in \mathcal{O}(P_1)$.
Set
\[
\begin{tabular}{r|c|c|c|c|c|c}
$v_i$ & $v_1$ & $v_2$ & $v_3$ & $v_4$ & $v_5,v_6$ & $v_7,v_8$\\ \hline
$\tau_{v_i}(\lambda)$ & $\{^3 1\}$ & $\{^3 2\}$ & $\{-1, ^2 -4\}$ & $\{-2, ^2 -3\}$  & $\{-1, 3, -5\}$ &  $\{-2, 4, 5\}$
\end{tabular}
\]
Note that if two distinct vertices $x,y$ are adjacent in $H$ then there exists $j \in [1,5]$ such that
$j \in \tau_x(\lambda)$ and $-j \in \tau_y(\lambda)$.
Hence one can directly check that $S_1=\{v_1,v_4,v_7,v_8\}$ and $S_2=\{v_2,v_3,v_5,v_6\}$ are two maximal disjoint independent sets of vertices of $H$,
such that $S_1 \cup S_2 =V(H)$.
In other words, the graph $H$ is a bipartite graph with parts $S_1$ and $S_2$. Since among the five cubic graphs of order $8$ the unique bipartite graph is the cube, see \cite{GraphsHB},
the graph $H$ is the cube.
\end{proof}

\begin{figure}[ht]
\begin{minipage}[b]{5.7cm}
\centering
\includegraphics[width=3.5cm]{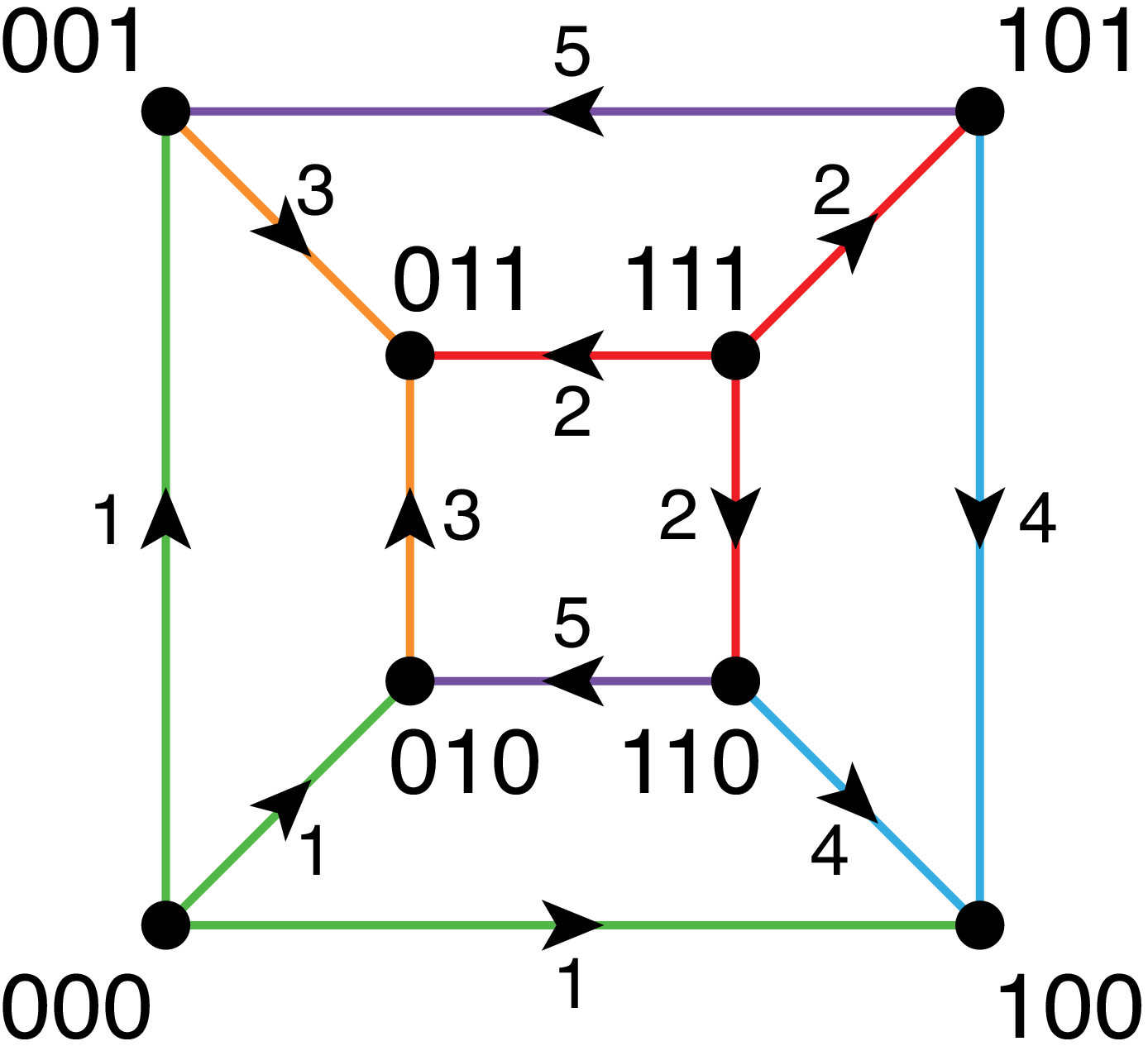}
\caption{\ }
\label{prop1}
\end{minipage}
\ \hspace{2mm} \hspace{3mm} \
\begin{minipage}[b]{5.7cm}
\centering
\includegraphics[width=3.5cm]{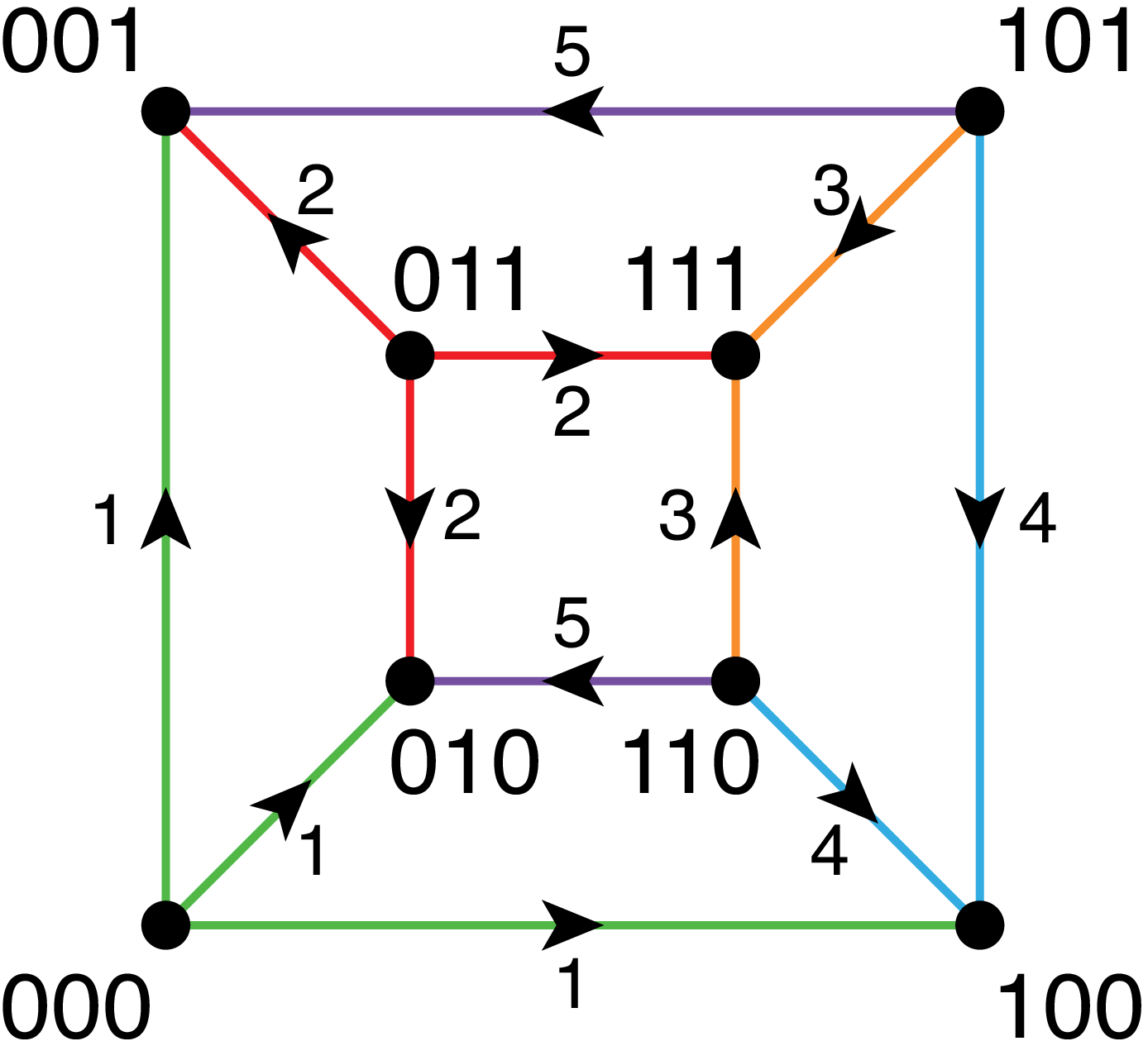}
\caption{\ }
\label{prop2}
\end{minipage}
\end{figure}

\begin{proposition}\label{prop:pot2}
The pot
$$P_2= \big\{ \{^31\}, \{^32\}, \{-2, \,^2{-3}\}, \{-1, \,^2{-4}\},  \{3,4,5\}, \{-1,-2,-5\}\big\}$$
is a biminimal pot that $3$-realizes the cube.
\end{proposition}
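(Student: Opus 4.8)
The plan is to follow the template of the proof of Proposition~\ref{prop:pot1}, since $P_1$ and $P_2$ share their first four tiles and differ only in the last two. First I would fix an edge-coloring $\lambda:E(\dQ)\to[1,5]$ of an orientation of the cube with $\mathcal{P}(\lambda)=P_2$ (this is what Figure~\ref{prop2} is meant to display) and record the six tiles together with the vertices of $Q$ carrying them in a table. This already shows $Q\in\mathcal{O}(P_2)$ and that $P_2$ uses $5$ colours and $6$ tiles.

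Next I would show that $P_2$ realizes $Q$ in Scenario~$3$. By Remark~\ref{tilestructure}, every $G\in\mathcal{O}(P_2)$ is simple: one checks that $|{-t}\cap t|\le 1$ for each $t\in P_2$ and $|{-t_i}\cap t_j|\le 1$ for every pair of tiles, so no loops or multiple edges can arise. Since each tile of $P_2$ has size $3$, such a $G$ is moreover cubic. Writing $R_i$ for the number of vertices receiving the $i$-th tile and imposing, for each colour $j$, that the number of $+j$ ends equals the number of $-j$ ends (as in~\cite{Ellis2014}), one obtains a homogeneous linear system whose nonnegative integer solutions are exactly $(R_1,\dots,R_6)=r(1,1,1,1,2,2)$ with $r\ge 1$. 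Hence $|V(G)|=8r\ge 8$, with equality precisely when $(R_1,\dots,R_6)=(1,1,1,1,2,2)$.

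For the extremal case $r=1$, the graph $G$ is a simple cubic graph on $8$ vertices, of which there are exactly five~\cite{GraphsHB}. Here the bipartition argument is in fact cleaner than for $P_1$, because every tile of $P_2$ is sign-pure: its entries are either all positive or all negative. As two vertices can be adjacent only when some colour $j$ occurs positively at one and negatively at the other, the set $A$ of vertices carrying an all-positive tile and the set $B$ of vertices carrying an all-negative tile are both independent; explicitly $A=\{v_1,v_2,v_5,v_6\}$ and $B=\{v_3,v_4,v_7,v_8\}$ partition $V(G)$. Thus $G$ is bipartite, and since the cube is the unique bipartite cubic graph on $8$ vertices~\cite{GraphsHB}, we get $G\simeq Q$. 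This yields $Q\in\mathcal{O}_3(P_2)$, and biminimality follows from Lemma~\ref{BT3cube}, as $|P_2|=6=T_3(Q)$ and $|\Sigma(P_2)|=5=B_3(Q)$.

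I expect the only genuinely new work to be verifying the adjacency constraints that pin down the bipartition, since the sign pattern of the last two tiles differs from that of $P_1$; the simplicity check, the linear system, and the appeal to the classification of order-$8$ cubic graphs transfer essentially verbatim from Proposition~\ref{prop:pot1}. Because the tiles of $P_2$ turn out to be sign-pure, this step is more transparent than in the previous case, so I anticipate no real obstacle beyond carefully confirming that the forced adjacencies are consistent with a cubic bipartite graph.
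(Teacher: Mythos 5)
Your proposal is correct and follows exactly the route the paper intends: the published proof of Proposition~\ref{prop:pot2} simply exhibits the table of tiles and invokes ``the same reasoning'' as Proposition~\ref{prop:pot1}, which is precisely what you carry out (simplicity via Remark~\ref{tilestructure}, the linear system forcing $(R_1,\dots,R_6)=r(1,1,1,1,2,2)$ and $|V(G)|=8r$, and bipartiteness singling out the cube among the five cubic graphs of order $8$). Your observation that all tiles of $P_2$ are sign-pure, so the bipartition $A\cup B$ is immediate, is a clean and valid shortcut for the independence check that the paper performs by direct inspection in the $P_1$ case.
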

\begin{proof}
Let $\lambda:E(\dQ)\rightarrow [1,5]$ be the edge-coloring of $Q$ inducing the pot $P_2$, that is $P_2=\mathcal{P}(\lambda)$,
see  Figure \ref{prop2}.
The following tabular shows the tiles:
\[
\begin{tabular}{r|c|c|c|c|c|c}
$x$ & 000 & 011 & 111 & 100 & $101,110$ & $001,010$\\ \hline
$\tau_x(\lambda)$ & $\{^3 1\}$ & $\{^3 2\}$ & $\{-2, ^2 -3\}$ & $\{-1, ^2 -4\}$  & $\{3,4,5\}$ &  $\{-1,-2,-5\}$
\end{tabular}
\]
Then the result can be proved with the same reasoning of the proof of Proposition \ref{prop:pot1}.
\end{proof}

\begin{remark}
Note that $P_1$ e $P_2$ are non-isomorphic pots. By contradiction, assume there exists an isomorphism $f$ between $P_1$ and $P_2$. Recall that $f(-i) = -f(i)$ for all $i$ in $\pm\Sigma(P_1)=\pm\Sigma(P_2) = \pm[1,5]$. Then $f$ either fixes colors $1$, $2$, $3$, and $4$, or $f$ swaps $1$ with $2$ and $3$ with $4$. Therefore, in both cases, $f([1,4])=[1,4]$. It follows that there is no tile in $P_1$ whose image is the tile $\{3,4,5\}\in P_2$.
\end{remark}


\begin{lemma}\label{monochromatic}
If $P$ is a pot that $3$-realizes $Q$ and $|\Sigma(P)|=5$,
then $P$ has at least two monochromatic tiles.
\end{lemma}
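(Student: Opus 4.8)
The goal is to show that any $5$-color pot $3$-realizing $Q$ must contain at least two monochromatic tiles. By Lemma~\ref{Tom}, the monochromatic tiles of $\mathcal{P}(\lambda)$ correspond exactly to $3$-stars among the color classes of $\underline{\lambda}$, and each color class is either a star (of size $1$, $2$, or $3$) or a perfect matching of the form $\{e, e+111\}$. The plan is to argue by contradiction: assume $P$ has at most one monochromatic tile, hence $\underline{\lambda}$ has at most one $3$-star among its $5$ color classes, and derive a contradiction with the fact that the $5$ classes must partition the $12$ edges of $Q$.

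First I would set up the counting. Since $Q$ has $|E(Q)|=12$ and there are exactly $5$ color classes, I would bound the total number of edges that the five classes can carry. Each class is a star of size at most $3$ (so at most $3$ edges) or a matching $\{e, e+111\}$ (exactly $2$ edges). If at most one class is a $3$-star, then the remaining four classes each contribute at most $2$ edges, giving a total of at most $3 + 4\cdot 2 = 11 < 12$ edges. This immediate inequality is the heart of the argument: it shows that one $3$-star is not enough to cover all $12$ edges with only $5$ classes. I would phrase this carefully, noting that a matching contributes $2$ edges and a star contributes at most $3$, and that the only way for a class to contribute $3$ edges is to be a $3$-star.

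The one subtlety to handle is the boundary case where some classes are matchings (which have $2$ edges, tying the star-of-size-$2$ count) versus stars of size $2$; since both contribute exactly $2$ edges, the bound $3 + 4\cdot 2 = 11$ is unaffected, so no special casing is needed there. I expect the main (though still mild) obstacle to be stating the edge-count bound rigorously enough to cover all class types uniformly: I would emphasize that \emph{every} color class, whether a star or a matching, contributes at most $3$ edges, and contributes exactly $3$ only when it is a $3$-star. With the zero-or-one $3$-star assumption, summing over the five classes yields at most $11$ edges, contradicting $|E(Q)|=12$. Hence at least two color classes are $3$-stars, and by Lemma~\ref{Tom}(ii) the pot $\mathcal{P}(\lambda)$ contains at least two monochromatic tiles, which completes the proof.
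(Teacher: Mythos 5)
Your proof is correct and follows essentially the same route as the paper: both rest on Lemma~\ref{Tom} to bound each of the five color classes by $3$ edges (with equality only for $3$-stars) and count against $|E(Q)|=12$; the paper just derives $c_1\geq 2$ directly from $12\leq 3c_1+2(5-c_1)$ rather than arguing by contradiction.
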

\begin{proof}
Let $\lambda:E(\dQ)\rightarrow [1,5]$ be a $3$-realization of $Q$ and set $P=\mathcal{P}(\lambda)$. By Lemma \ref{Tom}.$(i)$, the color classes induced by $\lambda$ on $Q$ have size at most $3$. Denoting by $c_1$ the number of those of size 3, we have that
\[
12=|E(Q)|=\sum_{i=1}^5 |\lambda^{-1}(i)|
\leq 3c_1 + 2(5-c_1) = c_1 + 10.
\]
By Lemma \ref{Tom}$.(ii)$, it follows that $P$ has $c_1\geq 2$ monochromatic tiles.
\end{proof}

We now prove that the pots in Proposition \ref{prop:pot1} and Proposition \ref{prop:pot2} are the unique (up to isomorphisms) biminimal pots that $3$-realize the cube.

\begin{theorem}\label{unique}
A biminimal pot that $3$-realizes the cube is necessarily isomorphic to one of the following:
\begin{enumerate}
  \item[(i)]
    $P_1=\big\{ \{^31\}, \{^32\}, \{-2, \,^2{-3}\}, \{-1, \,^2{-4}\},
            \{-1,3,-5\}, \{-2,4,5\}\big\}$;

  \item[(ii)] $P_2=\big\{ \{^31\}, \{^32\}, \{-2, \,^2{-3}\}, \{-1, \,^2{-4}\},
                        \{3,4,5\}, \{-1,-2,-5\}\big\}$.
\end{enumerate}
\end{theorem}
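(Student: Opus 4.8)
The plan is to show that any biminimal pot $P$ that $3$-realizes the cube must, after suitable isomorphism, coincide with one of $P_1$ or $P_2$. By definition of biminimal together with Lemma~\ref{BT3cube}, we have $|P|=6$ and $|\Sigma(P)|=5$. The strategy is to reconstruct $P$ entirely from structural constraints on the six tiles, using the fact (Lemma~\ref{Tom}) that the color classes of $\underline{\lambda}$ are stars of size $1$, $2$, or $3$, or a matching $\{e,e+111\}$. First I would fix the realization $\lambda:E(\dQ)\to[1,5]$ with $P=\mathcal{P}(\lambda)$ and record the global edge count $12=\sum_{i=1}^5|\lambda^{-1}(i)|$.

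The first key step is to pin down how colors are distributed across tiles. By Lemma~\ref{monochromatic}, $P$ has at least two monochromatic tiles, and by Lemma~\ref{Tom}.$(ii)$ each monochromatic tile corresponds to a $3$-star color class, each bichromatic tile to a $2$-star color class. I would argue that there are exactly two monochromatic tiles: the inequality in the proof of Lemma~\ref{monochromatic} forces $c_1\ge 2$, and a short counting argument on the remaining edges and tiles (six tiles of size $3$ give $18$ tile-entries, matched against the $12$ edges counted with multiplicity $2$, i.e. $24$ directed incidences) together with the size-$\le 3$ constraint rules out $c_1\ge 3$. This fixes the color-class profile: two $3$-stars, and the remaining three color classes covering the other $6$ edges. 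Next I would determine the multiplicities of each color across all tiles by summing: each color $i$ appears in $|\lambda^{-1}(i)|$ edges, hence contributes $2|\lambda^{-1}(i)|$ signed entries (one $+i$ at the tail-tile, one $-i$ at the head-tile, via Remark~\ref{incidentedges}). Normalizing by an isomorphism (Remark~\ref{isomorphism:rem}) so that the two monochromatic tiles are $\{^31\}$ and $\{^32\}$, and orienting so their entries are positive, forces the color classes of $1$ and $2$ to be $3$-stars and forces the tiles receiving $-1$ and $-2$.

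The second key step is to reconstruct the four non-monochromatic tiles. Since $\{^31\}$ is a tile and $1$ has a $3$-star class, every edge colored $1$ points away from the vertex carrying $\{^31\}$, so three distinct tiles each contain exactly one $-1$; similarly for $-2$. Combined with the tile-size constraint and the fact that tiles must be realizable (the system of $R_i$ equations from \cite{Ellis2014}, as in the proof of Proposition~\ref{prop:pot1}, must admit a positive solution), I would show the four remaining tiles must be $\{-2,\,^2{-3}\}$, $\{-1,\,^2{-4}\}$, plus two tiles built from the last color $5$ and the leftover entries $\pm3,\pm4$. Here the matching/star dichotomy of Lemma~\ref{Tom}.$(i)$ for colors $3$, $4$, $5$ is decisive, and the two ways of consistently distributing the signs of color $5$ (whether $5$ and $-5$ land in tiles already containing $3,4$ or their negatives) yield precisely the two pots $P_1$ and $P_2$. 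Finally I would verify, via Propositions~\ref{prop:pot1} and~\ref{prop:pot2}, that both candidates do $3$-realize $Q$, and that they are non-isomorphic (as already noted in the Remark following Proposition~\ref{prop:pot2}), so the classification is exactly these two.

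The main obstacle I anticipate is the bookkeeping in the second step: ruling out all spurious sign- and color-distributions among the entries $\pm3,\pm4,\pm5$ that satisfy the edge-count and star/matching constraints but fail to give a genuine realization of $Q$ (as opposed to some other cubic or multigraph). The cleanest way to control this is to lean on the solvability of the homogeneous $R_i$-system together with Remark~\ref{incidentedges}, which together sharply restrict which signed colors can coexist in a tile and how often each tile can appear, thereby collapsing the combinatorial search to the two listed pots.
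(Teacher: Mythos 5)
Your opening moves coincide with the paper's: biminimality plus Lemma~\ref{BT3cube} gives $|P|=6$ and $|\Sigma(P)|=5$, Lemma~\ref{monochromatic} and Lemma~\ref{Tom} give (at least) two monochromatic tiles, and Theorem~\ref{isomorphism}/Remark~\ref{isomorphism:rem} let you normalize them to $\{^31\}$ and $\{^32\}$ with positively oriented $3$-star classes. After that, however, the decisive classification step is asserted rather than proved, and two of the specific claims you substitute for it do not hold. First, your counting argument for ruling out a third monochromatic tile (``18 tile-entries against 24 directed incidences'') does not rule it out: the size profile $3+3+3+2+1=12$ for the five color classes is perfectly consistent with that count, so $c_1=3$ survives your argument. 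It is in fact impossible, but showing this requires placing three pairwise non-adjacent $3$-star centers on the cube and checking that the induced pot then has at least $7$ distinct tiles --- which is exactly what the paper does in its Case~2.1. Second, the claim that ``three distinct tiles each contain exactly one $-1$'' is false: the three neighbours of the center of the $1$-star each receive a $-1$, but their tiles need not be distinct, and indeed in $P_1$ only two distinct tiles contain $-1$ (one of them, $\{-1,3,-5\}$, occurring at two vertices). Taken literally, three distinct tiles containing $-1$ plus three containing $-2$ plus the two monochromatic tiles would force $|P|\geq 8$ and derail the reconstruction.

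The deeper issue is that the step you flag as ``bookkeeping'' is the actual content of the theorem, and the $R_i$-system together with Remark~\ref{incidentedges} is not sharp enough to collapse the search on its own: one must also decide which candidate sign/color distributions are realizable \emph{on the cube}. The paper controls this geometrically rather than at the level of the pot: up to an automorphism of $Q$ the two $3$-star centers are either antipodal ($000$ and $111$) or at distance two ($000$ and $011$), so the six remaining edges form either a $6$-cycle or a $4$-cycle with two pendant edges $\{e,e+111\}$, and Lemma~\ref{Tom}.$(i)$ then leaves only a handful of ways to partition that subgraph into the color classes $Q(3),Q(4),Q(5)$ ($2$-stars, matchings of the form $\{e,e+111\}$, or a $3$-star). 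Each subcase is settled by computing $|\mathcal{P}(\underline{\lambda})|$ and invoking Lemma~\ref{_lambda}, with exactly one surviving configuration in each of the two geometric cases, yielding $P_1$ and $P_2$ respectively. Your tile-centric reconstruction could in principle be completed, but as written it omits precisely this finite geometric enumeration, so the proof has a genuine gap at its core.
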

\begin{proof}
Let $\lambda: E(\dQ)\rightarrow [1,c]$ be a $3$-realization of $Q$
and set $P=\mathcal{P}(\lambda)$. We assume that $P$ is biminimal, that is, $c=5$ and $|P|=6$. By Theorem \ref{isomorphism} and Remark \ref{isomorphism:rem},
it is enough to show that
up to a permutation of the colors $[1,c]$,
up to an automorphism of $Q$, and
by switching (if necessary) the directions of all edges of some color classes of $\lambda$,
we obtain a $3$-realization of $Q$ whose pot is either $1$ or $2$.

Let $Q(i)$ be the $i$-color class determined by $\underline{\lambda}$ on $Q$, for $1\leq i \leq 5$, and by Lemma \ref{monochromatic} we have that $P$ contains at least two monochromatic tiles, say  $t_1$ and $t_2$. Up to a permutation of the colors, we may assume that $\|t_j\|=\{^3 j\}$, for $j=1,2$.
By Lemmas \ref{Tom} and \ref{monochromatic}, there exist at least two colors classes isomorphic to a $3$-star: up to a color permutation, we assume that
$\underline{\lambda} (Q(i))=\|t_i\|$. It is not restrictive to assume that $t_1$ is the tile of vertex $000$, and
in view of the simmetries of the cube, we may assume without loss of generality that $t_2$ is the tile of either $111$ or $011$.

\begin{description}[style=unboxed,leftmargin=0cm]
  \item[Case 1]$t_2=\tau_{111}(\lambda)$. Then, as shown in Figure \ref{Case1}, $Q\setminus\{Q_{000}\,\cup\, Q_{111}\}$ is the $6$-cycle
\[C=(010, 011,001,101,100,110).\]
Note that $Q(3), Q(4), Q(5)$ can be seen as the color classes induced by
$\underline{\lambda}$ on $C$. By Lemma \ref{Tom}, we necessarily have that each $Q(i)$, $i=3,4,5$, has size exactly $2$, hence each $Q(i)$ is either a $2$-star or a matching $\{e, e+111\}$ for some $e\in E(C)$.
\begin{figure}[ht]
\begin{minipage}[b]{5.7cm}
\centering
\includegraphics[width=3.5cm]{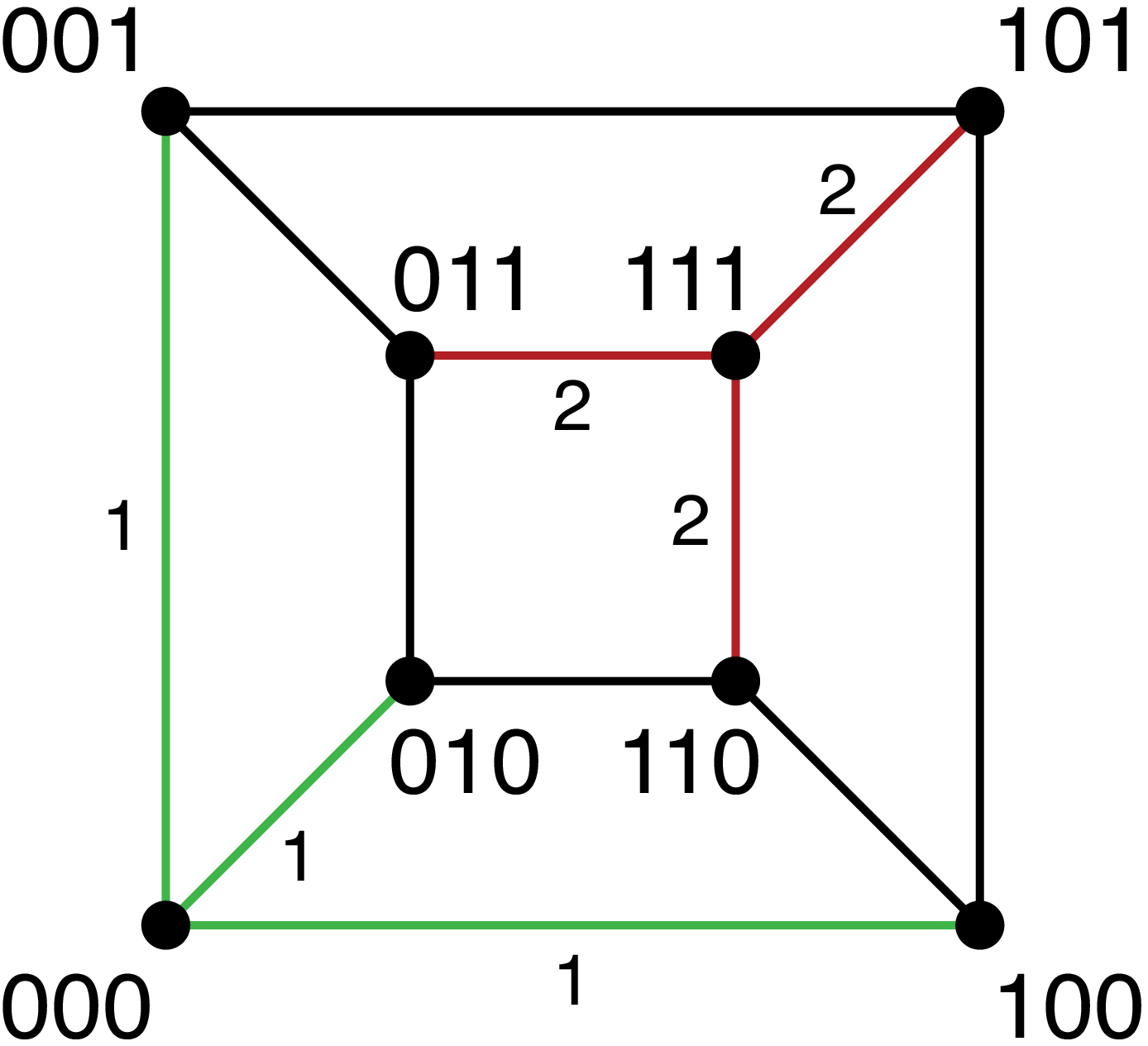}
\caption{\ }
\label{Case1}
\end{minipage}
\ \hspace{2mm} \hspace{3mm} \
\begin{minipage}[b]{5.7cm}
\centering
\includegraphics[width=3.5cm]{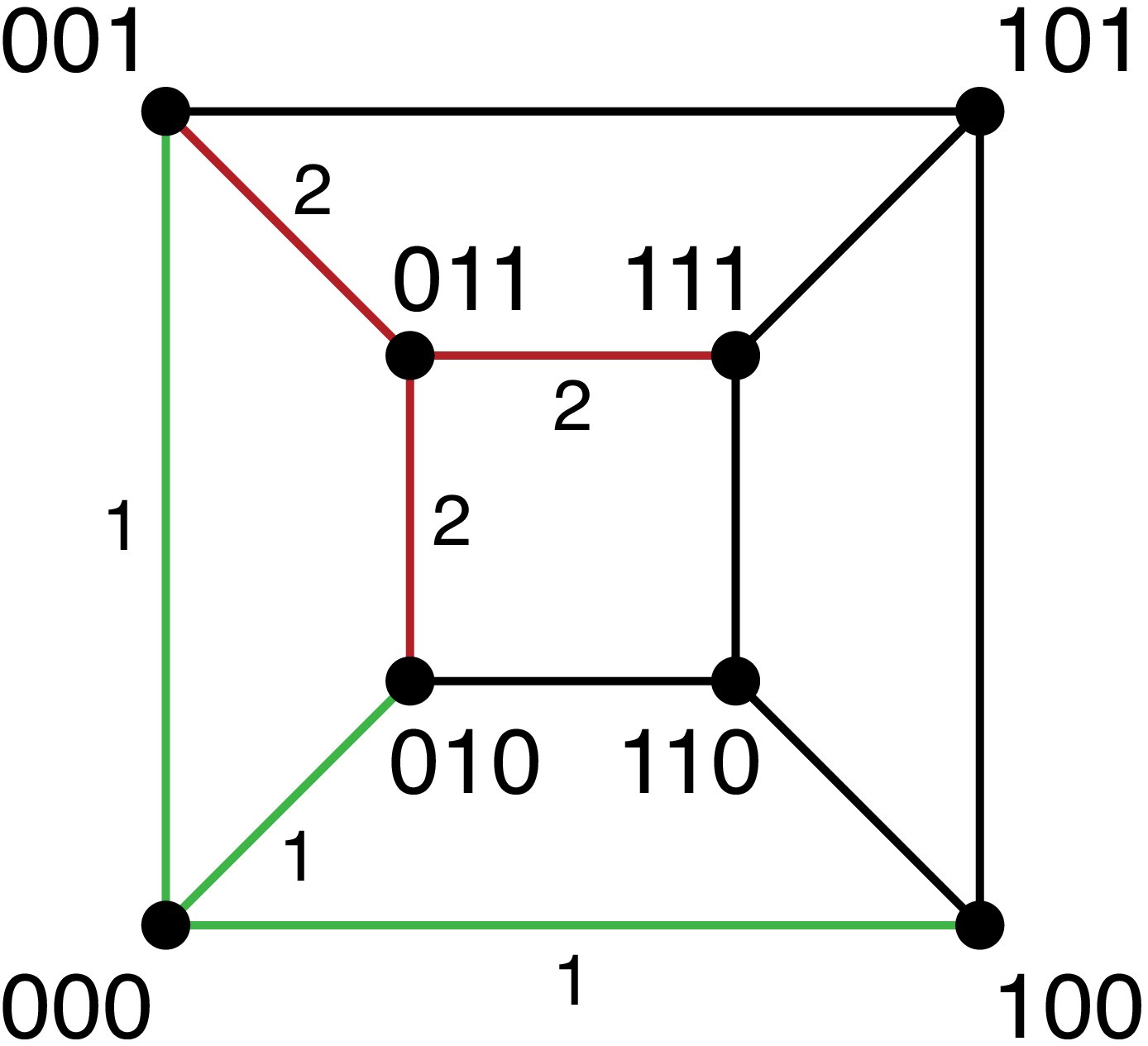}
\caption{\ }
\label{Case2}
\end{minipage}
\end{figure}
  \begin{enumerate}[style=unboxed,leftmargin=0cm]
\item[] Case 1.1:
    $Q(3), Q(4), Q(5)$ are matchings. Up to a permutation of the colors, we may assume that
$Q(3)=\{\{010, 011\}, \{101, 100\}\}$,
$Q(4)=\{\{011,001\}, \{100, 110\}\}$,
$Q(5)=\{\{001,101\}, \{110,010\}\}$.
The edge-coloring $\underline{\lambda}$ is then completely determined by its color classes, and one can check that $|\mathcal{P}(\underline{\lambda})|\geq 7$. By Lemma
\ref{_lambda}, it follows that $|\mathcal{P}(\lambda)|\geq 7$ contradicting the assumption.

\item[] Case 1.2: $Q(3), Q(4), Q(5)$ are $2$-stars. Up to a permutation of the colors, we may assume that
$Q(3)=[010, 011,001]$,
$Q(4)=[001,101,100]$,
$Q(5)=[100,110,010]$.
As before, one can check that $|\mathcal{P}(\underline{\lambda})|\geq 7$.
By Lemma \ref{_lambda}, it follows that $|\mathcal{P}(\lambda)|\geq 7$ contradicting the assumption.

\item[] Case 1.3: $Q(3), Q(4)$ are $2$-stars, and $Q(5)$ is a matching.
Up to a permutation of the colors, we may assume that
$Q(3)=[010,011,001]$,
$Q(4)=[101,100,110]$,
$Q(5)=\{\{001,101\}, \{110,010\}\}$.
One can check that
\[
\begin{tabular}{r|c|c|c|c|c|c}
$x$ & 000 & 111 & 011 & 100 & $001,010$ & $101,110$\\ \hline
$\tau_x(\underline{\lambda})$ &
    $\{^31\}$ & $\{^32\}$ & $\{2, ^23\}$ & $\{1, ^24\}$ & $\{1, 3, 5\}$
    & $\{2, 4, 5\}$
\end{tabular}
\]
In view of Remark \ref{incidentedges}, by switching if necessary the directions of all edges of $\dQ$ with the same color,
we can assume that the edges of $\dQ_{000}$ (resp. $\dQ_{111}$) are all outgoing from $000$ (resp. $111$): hence
\[
\text{$\tau_{000}(\lambda)=\{^31\}$ and $\tau_{111}(\lambda)=\{^32\}$}.
\]
We can also assume that
the two edges of $\dQ$ colored 3 (resp. $4$) are ingoing in $011$ (resp. $100$), while the two edges colored $5$ are outgoing from 101 and 110.
Therefore, $\lambda$ is the realization of $Q$ shown in Figure \ref{prop1}, and
\[
\text{
$\tau_{011}(\lambda)=\{-2, \,^2{-3}\}$,
$\tau_{100}(\lambda)=\{-1, \,^2-4\}$,
}
\]
\[
\text{
$\tau_{001}(\lambda)=\tau_{010}(\lambda)=\{-1,3,-5\}$
and
$\tau_{101}(\lambda)=\tau_{110}(\lambda)=\{-2,4,5\}$.
}
\]
  \end{enumerate}


  \item[Case 2] $t_2=\tau_{011}(\lambda)$. Then, as shown in Figure \ref{Case2}, $Q\setminus\{Q_{000}\,\cup\, Q_{111}\}$ is the graph $G=C \,\cup\, \{e, e+111\}$ consisting of a $4$-cycle $C=(110, 111,101,100)$ with two pendant edges $e=\{110, 010\}$ and
  $e+111=\{001,101\}$.
Note that $Q(3), Q(4), Q(5)$ can be seen as the color classes induced by
$\underline{\lambda}$ on $C$. By Lemma \ref{Tom}, we necessarily have that either some $Q(i)$ has size $3$, or each $Q(i)$ (has size $2$, hence) is either a $2$-star or the matching $\{e, e+111\}$.

  \begin{enumerate}[style=unboxed,leftmargin=0cm]
     \item[] Case 2.1: some $Q(i)$, $3\leq i\leq 5$, is a $3$-star. Up to a permutation of the colors, we may assume that $Q(3)$ is a $3$-star. Also, the center of $Q(3)$ is either $110$ or $101$.
%
     In both cases, one can check that
     $|\mathcal{P}(\underline{\lambda})|\geq 7$, hence (by Lemma
\ref{_lambda}) $|\mathcal{P}(\lambda)|\geq 7$ contradicting the assumption.

     \item[] Case 2.2: $Q(3), Q(4)$ and $Q(5)$ are $2$-stars. Then, up to a permutation of the colors, we have that either
     \begin{align*}
       Q(3)=[010, 110,111],\; Q(4)=[110,100,101],\; Q(5)=[111,101,001], \text{or}\\
       Q(3)=[010, 110,100],\; Q(4)=[001,101,100],\; Q(5)=[110,111,101].
     \end{align*}
In both cases, one can check that $|\mathcal{P}(\underline{\lambda})|\geq 7$,
hence (by Lemma \ref{_lambda}) $|\mathcal{P}(\lambda)|\geq 7$ contradicting the assumption.

    \item[] Case 2.3: $\{e, e+111\}$ is a color class of $\underline{\lambda}$.
    Up to a permutation of the colors, we may assume that $Q(5) = \{e, e+111\}$.
    Therefore, both $Q(3)$ and $Q(4)$ are $2$-stars of the $4$-cycle $C=(110, 111,101,100)$. If $Q(3)=[100,110,111]$ and $Q(4)=[111,101, 100]$, one can check that
    $|\mathcal{P}(\underline{\lambda})|\geq 7$,
    hence (by Lemma \ref{_lambda}) $|\mathcal{P}(\lambda)|\geq 7$
    contradicting the assumption. Therefore, $Q(3)=[110,111,101]$, $Q(4)=[101,100, 110]$, and
\[
\begin{tabular}{r|c|c|c|c|c|c}
$x$ & 000 & 011 & 111 & 100 & $001,010$ & $101,110$\\ \hline
$\tau_x(\underline{\lambda})$ &
    $\{^31\}$ & $\{^32\}$ & $\{2, ^23\}$ & $\{1, ^24\}$ & $\{1, 2, 5\}$
    & $\{3, 4, 5\}$
\end{tabular}
\]
In view of Remark \ref{incidentedges}, by switching if necessary the directions of all edges of $\dQ$ with the same color,
we can assume that the edges of $\dQ_{000}$ (resp. $\dQ_{011}$) are all outgoing from $000$ (resp. $011$): hence $\tau_{000}(\lambda)=\{^31\}$ and $\tau_{011}(\lambda)=\{^32\}$.
We can also assume that
the two edges of $\dG$ colored $3$ (resp. $4$) are ingoing in $111$ (resp. $100$), while the two edges colored $5$ are outgoing from $101$ and $110$.
Therefore, $\lambda$ is the realization of $Q$ shown in Figure \ref{prop2}, and
\[
\text{
$\tau_{111}(\lambda)=\{-2, \,^2{-3}\}$,
$\tau_{100}(\lambda)=\{-1, \,^2{-4}\}$,
}
\]
\[
\text{
$\tau_{001}(\lambda)=\tau_{010}(\lambda)=\{-1,-2,-5\}$
and
$\tau_{101}(\lambda)=\tau_{110}(\lambda)=\{3,4,5\}$.
}
\]
  \end{enumerate}
\end{description}

\end{proof}


\section{Acknowledgments}
The research of M.M. Ferrari was supported by the NSF-Simons Southeast Center for Mathematics and Biology (SCMB) through the grants National Science Foundation DMS1764406 and Simons Foundation/SFARI 594594.
 A. Pasotti and T. Traetta received support from INdAM-GNSAGA.

The research was also supported by the Italian Minister of University and Research through the project named ``Fondi per attivit\`a a carattere internazionale'' 2019\_INTER\_DICATAM\_TRAETTA.

\end{document}